\pgfplotsset{compat=1.10}
\title{Data-driven algorithms for signal processing with trigonometric rational functions\thanks{Funding: This work is partly supported by National Science Foundation grants DMS-2103317, DMS-1818757, DMS-1830274, DMS-2045646, DMS-1952757, and DGE-1650441.}} 
\author{ Heather Wilber\thanks{Oden Institute for Computational and Engineering Sciences, University of Texas at Austin, Austin, TX , 78712-1229, United States (\email{heather.wilber@oden.utexas.edu}).}
\and
Anil Damle\thanks{Department of Computer Science, Cornell University, Ithaca, NY 14853-4201, United States (\email{damle@cornell.edu}).}
\and Alex Townsend\thanks{Department of Mathematics, Cornell University, Ithaca, NY 14853-4201, United States (\email{townsend@cornell.edu}).} } 
\begin{document}
\maketitle

\begin{abstract}
Rational approximation schemes for reconstructing periodic signals from samples with poorly separated spectral content are described. These methods are automatic and adaptive, requiring no tuning or manual parameter selection.  Collectively, they form a framework for fitting trigonometric rational models to data that is robust to various forms of corruption, including additive Gaussian noise, perturbed sampling grids, and missing data. Our approach combines a variant of Prony's method with a modified version of the AAA algorithm. Using representations in both frequency and time space, a collection of algorithms is described for adaptively computing with trigonometric rationals. This includes procedures for differentiation, filtering, convolution, and more. A new MATLAB software system based on these algorithms is introduced. Its effectiveness is illustrated with synthetic and practical examples  drawn from applications including biomedical monitoring, acoustic denoising, and feature detection. \end{abstract}

\begin{keywords}
rational functions, signal processing, AAA algorithm, Prony's method
\end{keywords}

\begin{AMS}
41A20, 94A12
\end{AMS}

\section{Introduction}
Recovering functions from noisy, incomplete, or corrupted samples is a ubiquitous task in signal and data processing~\cite{kay1993fundamentals}. Here, we recover underlying signals that contain impulses, shocks, or other algebraic singularities that can cause traditional Fourier-based methods to underperform or fail. Examples where these signals appear include sensor monitoring and event detection tasks in seismology and oceanography~\cite{cichowicz1993automatic, martin2021linear}, biomedical signal processing~\cite{fridli2011rational, gilian2014ecg}, and time evolution along rays in nonsmooth media~\cite{vcerveny1982computation, ricker2012transient}.  We present a novel computing framework based on data-driven approximation with two complementary representations: (1) barycentric trigonometric rational approximations and (2) their Fourier transforms, which take the form of short sums of complex exponentials.  Toggling between these two representations lets us overcome computational and data-related challenges.

\begin{table} 
\centering
\begin{tabular}{cc}
Barycentric form & Exponential sums \\ 
\hline 
Differentiation (closed-form formula)~\cite{berrut2005recent} & Filtering and recompression~\cite{haut2013solving}\\
Imputing missing data~\cite{nakatsukasa2018aaa} & Pole symmetry preservation~\cite{beylkin2009nonlinear} \\
Stable evaluation~\cite{austin2017numerical,higham2004numerical} & Robustness to noise~\cite{potts2013parameter}\\
Rootfinding~\cite{nakatsukasa2018aaa}, identifying extrema & convolution~\cite{potts2013parameter}, cross-correlations \\
\hline
\end{tabular} 
\caption{Operations that are efficient and robust in the two representations. Having both representations and toggling between them allows us to compute a range of operations in signal processing.}
\label{tab:Operations} 
\end{table}

Several families of functions, including rationals, wavelets, and radial basis functions, are well-suited for resolving sharp features in data and modeling phenomena with slow-decaying spectral content~\cite{trefethen2013approximation, unser2000wavelets}. However,  methods that employ these functions often  require the a priori selection of shape parameters~\cite{rippa1999algorithm}, mother/father wavelets~\cite{debnath2003wavelets}, initial pole configurations~\cite{gustavsen1999rational}, or special rational basis functions~\cite{fridli2011rational}. One must carefully select parameters to avoid numerical instability and computational inefficiency.  In contrast, we introduce flexible, data-driven, general-purpose software tools that can be applied without special knowledge about the locations or types of singularities in the signal. Our methods construct trigonometric rational representations of signals, and we develop a collection of algorithms for computing adaptively and efficiently with them. Our approach combines adaptations of two primary approximation methods,  the AAA algorithm for rational approximation~\cite{nakatsukasa2018aaa} and a Fourier inversion technique based on Prony's method~\cite{beylkin2009nonlinear}. Both of these methods can automatically construct near-optimal rational approximations to functions, and we develop a framework that unifies the representations they construct. The result is an automatic rational approximation method that enjoys two main advantages: (1) it is more robust to various forms of corruption than either method alone, and (2) taken collectively, the two representations are efficient for performing a range of  post-processing operations (see \cref{tab:Operations}).

Periodicity is a common assumption encountered in the practical analysis of signals, and the trigonometric rational functions we use arise naturally from this assumption. However, with some adjustments, our ideas can also be applied using rational functions in the non-periodic setting (see~\cref{sec:nonper}). One might also consider settings where rational functions are fit to samples in frequency space, rather than signal space. Extensive literature on this topic is available from the digital filtering and optimal control communities~\cite{francis1987course, oppenheim2001discrete}.

\subsection{The approximation problem} 
Let $f\colon[0,1)\rightarrow \mathbb{R}$ be an unknown continuous periodic function of bounded variation, and suppose that for some integer $N$, we observe $2N+1$ noisy samples of $f$ at $T \coloneqq \{ x_j\}_{j = 0}^{2N}$, i.e., $y_j = f(x_j) + s_j$ for $0\leq j\leq 2N$. Here, $s_j$ can be: (i) additive white Gaussian noise (i.i.d.~normally distributed), (ii) popcorn noise (sparsely corrupted or arbitrarily large errors), or (iii) bounded deterministic errors. Throughout, we assume $\|f\|_\infty =1$, where $\|\cdot\|_\infty$ is the infinity norm on $[0, 1)$, and that the mean value of $f$ over $[0, 1)$ is $0$. Our central approximation problem is to fit a special class of trigonometric rational functions (see \cref{sec:intro_trig_rat}) to the $2N+1$ samples to construct $r_m$, a type $(m\!-\!1, m)$ trigonometric rational where $m$ is selected adaptively so that the sampling error satisfies $\max_{x_j \in T}|f(x_j) - r_m(x_j)| \leq \epsilon$, where $0 < \epsilon < 1$ is a  tolerance parameter. 

In practice, samples of $f$ are often available under far from ideal circumstances. For example, $T$ may consist of poorly distributed (i.e., not equally-spaced) points due to missing or corrupted data, $N$ may be too small to adequately resolve features of interest, or frequencies of interest may be cut off or otherwise distorted during observations. We are interested in how well $f$ can be recovered under such conditions. 

\subsection{Software} Once an approximant is constructed, we want to compute with it reliably.  Answering the following questions has shaped our software's development: 
\begin{enumerate}[leftmargin=\parindent,align=left,labelwidth=\parindent,labelsep=0pt,topsep=1ex]
\item \textit{Which applications are of importance?} Our methods supply a general-purpose approach for working with noisy samples from periodic, univariate signals. They require no a priori knowledge about locations or types of singularities, and they are designed to be flexible enough for use in a range of applications that involve the detection and identification of events (e.g., ECG and geophone monitoring tasks, engineering and financial applications where change-point detection is important, and in the analysis of dynamical systems, such as periodic contagion modeling).

\item \textit{What properties should our approximants have?}    
Like $f$, we want approximants to be periodic, real-valued, and continuous on $[0, 1)$. We employ trigonometric rational functions, which are the periodic analogue of rational functions. 
\item \textit{What form of approximant should we use?}
Trigonometric rationals can be expressed in many forms, but not all of these forms are numerically stable. 
For stable evaluation and rootfinding, we use the barycentric formula~\cite{berrut1988rational}. For efficient recompression and  operations carried out in Fourier space, we represent the Fourier transforms of our trigonometric rational approximants using short sums of weighted complex exponentials (see \cref{Lemma:Exponential_sum_Fourier_coeffs}). 

\item \textit{Which tools should we provide?} We offer a basic set of computational tools. This includes simple algebraic operations (addition, products), calculus-based operations (integration, differentiation), and tools for filtering, (de)convolving,  and rootfinding/polefinding. Whenever possible, we automatically recompress representations as trigonometric rationals/exponential sums to maintain efficiency. One can combine these tools to perform more complicated tasks. 

\end{enumerate}

Accompanying this work is the open-source code REfit~\cite{wilberREfit}, which is written in MATLAB and uses two classes called \texttt{rfun} and \texttt{efun}.  An \texttt{rfun} object stores a representation of $f$ as a barycentric trigonometric rational function. An  \texttt{efun} object stores a representation of the Fourier transform of $f$ as a weighted sum of complex exponentials.
After an \texttt{rfun} or \texttt{efun} object is constructed, it can be manipulated and analyzed through the operations implemented in the package (see~\cref{tab:fun}). The commands are overloaded so that they can be applied to either type of object, and binary operators can be used between objects of different type. 
\begin{table}[htbp] 
\centering
\caption{A selection of REfit commands.}
\begin{tabular}{cc}
 command & Operation \\
\hline \\[-8pt]
\texttt{+}, \texttt{-}, \texttt{.*}, \texttt{./} & basic arithmetic \\ 
\texttt{diff($\cdot$)}, \texttt{cumsum($\cdot$)} & differentiation, indefinite integration \\
\texttt{conv($\cdot$)} & convolution \\
\texttt{corr($\cdot$,$\cdot$)} & cross-correlation
\end{tabular} 
\label{tab:fun} 
\end{table} 

\vspace{.1cm}

\subsubsection{Connections to other work} In addition to the AAA algorithm~\cite{nakatsukasa2018aaa}  and a large body of work on Prony's method~\cite{beylkin2005approximation, beylkin2009nonlinear, plonka2016application, potts2013parameter, prony1795essai}, several recent developments in rational computing are related to our work. In~\cite{derevianko2021exact}, the authors also develop approximation methods that combine Prony's method with the AAA algorithm. Their work focuses on the recovery of parameters for extended models of exponential sums with polynomial coefficients\footnote{This corresponds to using rational functions with poles of higher multiplicity. We find that it is sufficient for our purposes to use clusters of simple poles.} that can also include purely oscillatory terms, whereas our focus is on the development of general-purpose software and rational approximation tools. We consider the setting where exponential sums are applied in Fourier space (corresponding to trigonometric rationals in signal space), and in contrast, they use barycentric rational approximations in Fourier space to recover sums of exponentials in signal space. For this reason, they do not make use of trigonometric rational functions. While the models proposed here for the recovery of signals are different from those in~\cite{derevianko2021exact}, the developments in~\cite{derevianko2021exact} are relevant to our setting.  For example, the derivatives of a trigonometric rational function correspond to exponential sums in Fourier space with polynomial coefficients. The parameters of such a model can be exactly recovered using the ideas in~\cite{derevianko2021exact}. 

In~\cite{baddoo2020aaatrig}, a trigonometric variant of AAA is introduced in the context of conformal mapping.  It is similar to the pronyAAA method we introduce in \cref{sec:trigAAA}.  However, a key difference is that it is not modified to account for any special connection to approximations in Fourier space.  Other popular and effective rational approximation methods include the RKFIT algorithm (and software package)~\cite{berljafa2017rkfit},  the vector fitting method~\cite{gustavsen1999rational}, and the Loewner framework of Mayo and Antoulas~\cite{karachalios2021loewner}, which is closely connected to the AAA algorithm. These methods have not been explicitly adapted to the periodic setting, and the first two methods are not suited to our needs as they require and can be sensitive to a set of initialization parameters (e.g., guesses of the location of the poles). Recently, AAA has been combined with RKFIT to overcome this difficulty~\cite{elsworth2019conversions}.  The development of our software package is largely inspired by Chebfun~\cite{Chebfun, wright2015extension}, which is primarily designed for computing automatically and adaptively with smooth functions. In some respects, our software serves as an analogue to Chebfun for computing with non-smooth functions.

The rest of this paper is organized as follows: In \cref{sec:intro_trig_rat} we briefly review trigonometric rational functions and the barycentric form. We introduce the trigonometric variant of the AAA algorithm that we use to construct barycentric trigonometric interpolants (see \cref{sec:pronyAAA}). In \cref{sec:Pronyappx}, we apply the regularized Prony's method (RPM) to construct approximations in Fourier space. In \cref{sec:transforms}, we introduce stable Fourier and inverse Fourier transform methods for moving between representations in the time and frequency domains. 
In~\cref{sec:Examples,sec:REfit}, we give 
examples and descriptions of the algorithms for computing with these representations.

\section{Trigonometric rational functions and their Fourier transforms}
\label{sec:intro_trig_rat}
The trigonometric rationals are the periodic analogue of rational functions~\cite{henrici1979barycentric, javed2016algorithms}. A trigonometric rational of period $1$ is the quotient of two trigonometric polynomials of period $1$, i.e., a function of the form
\begin{equation}
\label{eq:Rationalpqform}
r(x) = \dfrac{p_\ell(x)}{q_m(x)} = \frac{\sum_{j = -\ell}^\ell a_j e^{2 \pi i jx}}{\sum_{j = -m}^m b_j e^{2 \pi i j x}}, \quad x \in [0, 1).
\end{equation}
We call $r$ a type $( \ell, m)$ trigonometric rational function. We follow the convention that unless stated explicitly, a type $(\ell, m)$ function is in reduced form, meaning that $p_\ell$ and $q_m$ have no zeros in common.  We restrict our interest to the family of period $1$ trigonometric rationals $r_m$ of type $(m\!-\!1, m)$ that are real-valued and continuous on $[0, 1)$. Furthermore, we assume that the roots of the denominator $q_m$ are simple. Under these assumptions, if $\eta_j$ is a root of $q_m$, then so is its complex conjugate $\overline{\eta}_j$, as well as $\eta_j \pm K$, where  $K$ is any integer. We say that a pole of $r_m$ is any root, $\eta_j$, of $q_m$ such that $0 \leq Re(\eta_j)<1$.  In the same way, any root of $p_{m-1}$ with a real part in the interval $[0, 1)$ is called a zero of $r_m$.  

\subsection{Why trigonometric rationals?} Three key properties of this family of functions make them ideal for our setting. First, like standard rational functions, they are especially effective at resolving singularities.  For example, for particular choices of $f$, such as \hbox{$f(x) = |x - 1/2|-1/4$}, it is known that trigonometric rationals of type $( m, m)$ can converge to $f$ at a root-exponential rate with respect to $m$~\cite{trefethen2013approximation}.
\begin{figure}
\hspace{.2cm}
    \begin{minipage}{.48\textwidth} 
 \centering
  \begin{overpic}[width=.95\textwidth]{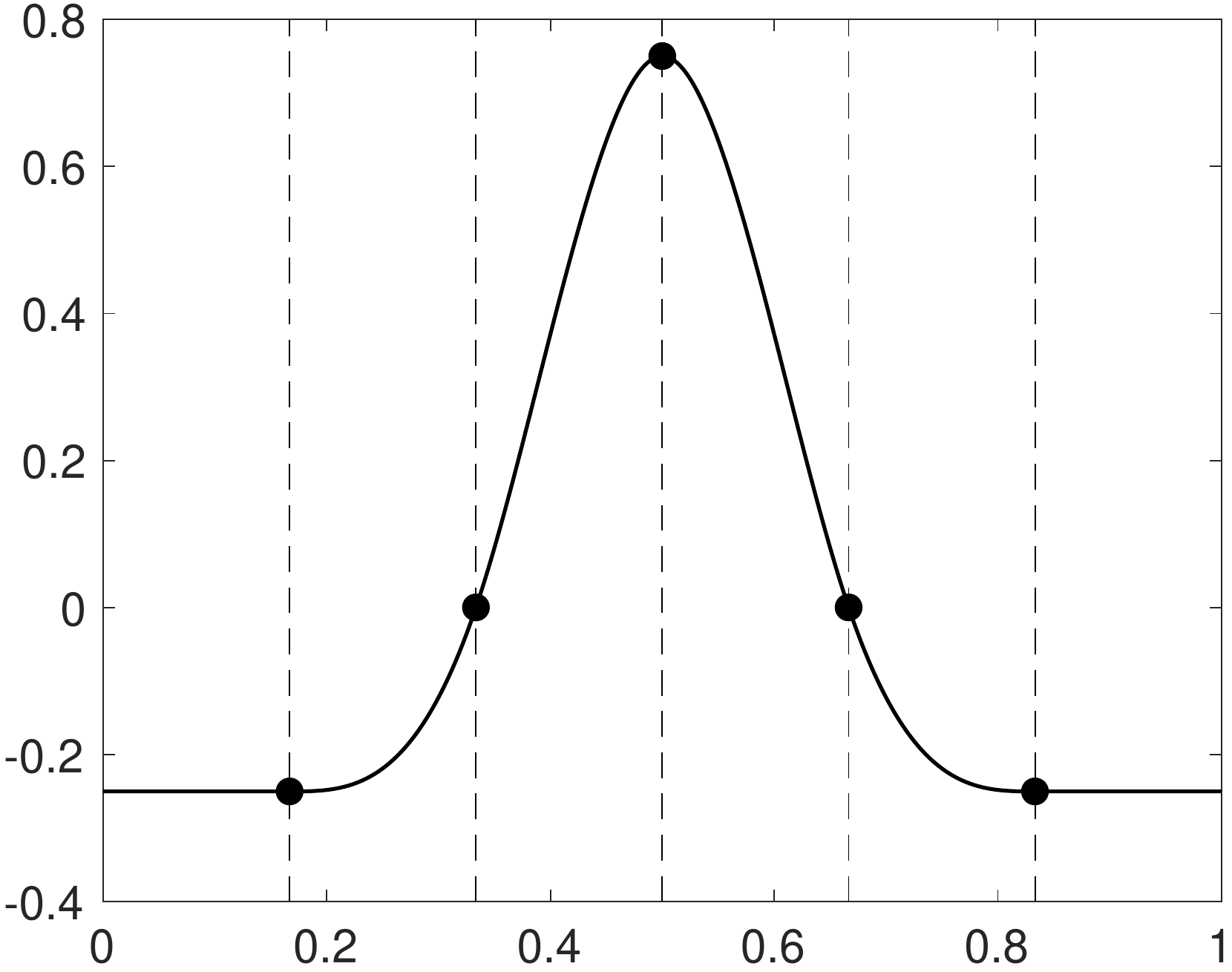}
    \put(50,-5){$x$}
  \end{overpic}
  \end{minipage}
  \hspace{.2cm}
      \begin{minipage}{.46\textwidth} 
 \centering
  \begin{overpic}[width=.78\textwidth]{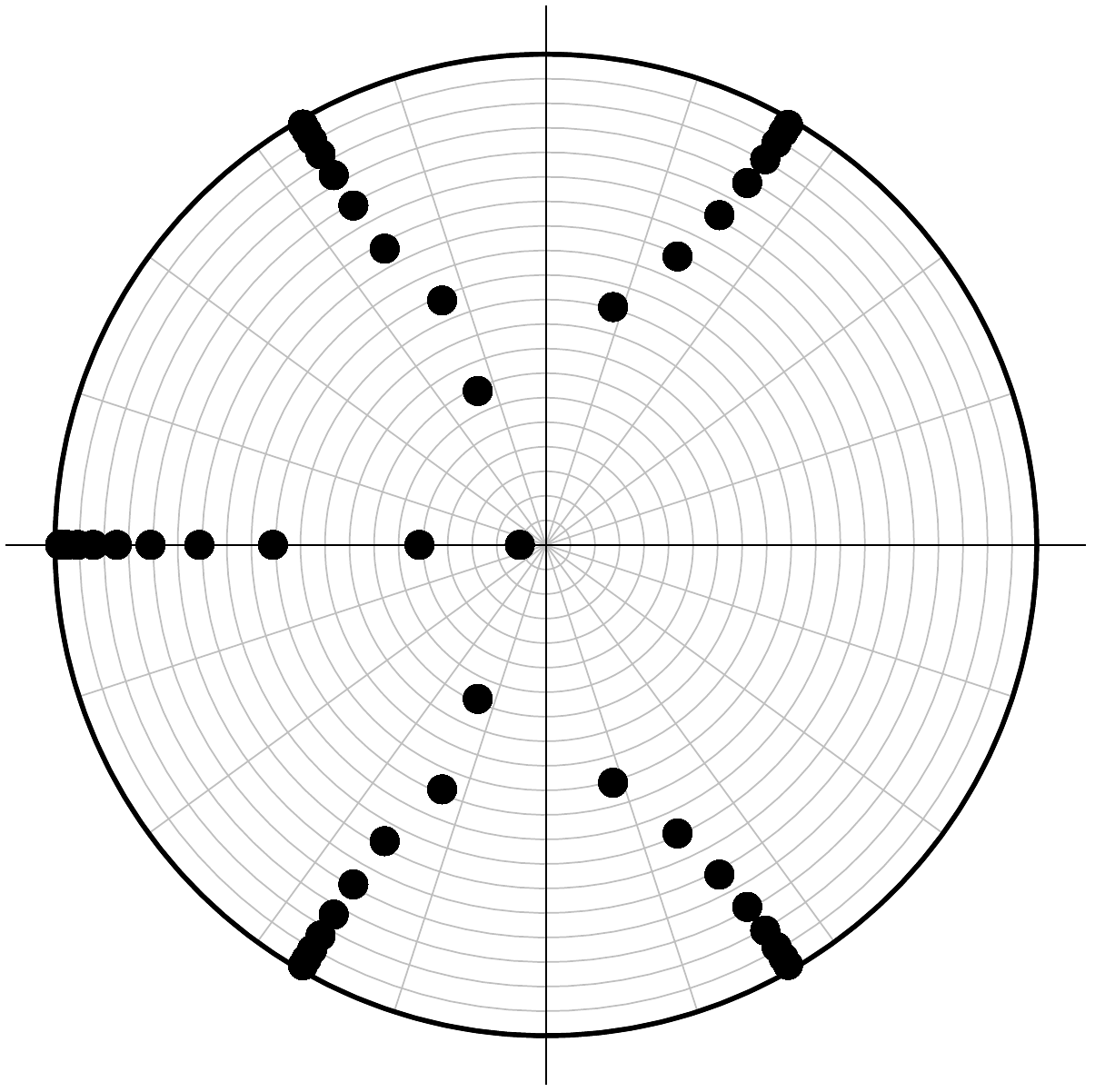}
    \put(102, 49){$Re$}
    \put(46, 102){$Im$}
  \end{overpic}
  \end{minipage}
  \caption{ Left: A trignometric rational function \smash{$r_m$} ($m = 44$) approximating the shifted and scaled cubic B-spline on the interval $[0, 1)$ is constructed and plotted. The knots of the spline occur at the dotted vertical lines. Away from the singularities, the absolute approximation error is on the order of \smash{$10^{-11}$}.  Approximate knot locations (black dots) are automatically computed using the poles of the rational \smash{$\tilde{r}_m(z) = r_m(x)$}, where \smash{$z = e^{2\pi i x}$} (see~\cref{eq:zt_form}).   Right: The poles of \smash{$\tilde{r}_m$} of magnitude $\leq 1$ are plotted in the unit disk in the complex plane. They cluster toward the points \smash{$e^{2 \pi i x_k}$} on the unit circle, where each \smash{$x_k$} is a knot in the spline. } 
  \label{fig:pole_plot}
  \end{figure}
Second, these functions can be represented efficiently in Fourier space. We make extensive use of the following  fact: 
\begin{lemma}
\label{Lemma:Exponential_sum_Fourier_coeffs}
Let  $r_m(x)$ be a type $(m\!-\!1, m)$ nonzero trigonometric rational function that is real-valued and continuous  on $[0, 1)$ with exactly $2m$ simple poles.  Let $\{ \eta_j\}_{j = 1}^m$ denote the collection of poles with $Im(\eta_j) > 0$. If the Fourier coefficients of $r_m$ are given by $\{(\hat{r}_m)_k\}_{k= -\infty}^{\infty}$, then there exist $\omega_j$ such that 
\begin{equation}
\label{eq:Exp_Sum}
(\hat{r}_m)_k  = R_m(k): = \begin{cases} 
\sum_{j=1}^m \omega_j e^{ \alpha_j k},  & k \geq 0, \\
\sum_{j=1}^m \overline{\omega}_j e^{- \overline{\alpha}_j k},  & k < 0,
\end{cases}
\end{equation}
where  $\alpha_j = 2\pi i \eta_j$. Here, $\overline{\omega}_j$ is the complex conjugate of $\omega_j$. 
\end{lemma}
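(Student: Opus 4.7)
The plan is to combine a partial-fraction decomposition with a geometric-series expansion. First, I would substitute $z = e^{2\pi i x}$ so that $r_m(x) = \tilde r(z)$ becomes a standard rational function in $z$. A degree count on \eqref{eq:Rationalpqform} with $\ell = m-1$ shows $\tilde r(z) = P(z)/Q(z)$ with $\deg P \le 2m-1$, $\deg Q = 2m$, and $P(0) = 0$; thus $\tilde r$ vanishes at both $z = 0$ and $z = \infty$, and its partial-fraction expansion has no polynomial part. Each pole $\eta_j$ of $r_m$ corresponds to a root $e^{2\pi i \eta_j}$ of $Q$, and since $|e^{2\pi i \eta_j}| = e^{-2\pi\operatorname{Im}(\eta_j)}$, the hypothesis $\operatorname{Im}(\eta_j) > 0$ places the $m$ points $u_j := e^{2\pi i \eta_j}$ strictly inside the unit disk, with their conjugate partners $1/\bar u_j = e^{2\pi i \bar\eta_j}$ strictly outside. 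Thus
\[
\tilde r(z) \;=\; \sum_{j=1}^m \Bigl[\frac{A_j}{z - u_j} + \frac{B_j}{z - 1/\bar u_j}\Bigr]
\]
for some $A_j, B_j \in \mathbb{C}$.

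Next, I would read off $(\hat r_m)_k$ by Laurent-expanding each summand on the unit circle. The interior pole gives a series in nonpositive powers of $z$, with general term $A_j u_j^{\ell-1} z^{-\ell}$ for $\ell \ge 1$, while the exterior pole gives a series in nonnegative powers of $z$, with general term $-B_j \bar u_j^{\ell+1} z^{\ell}$ for $\ell \ge 0$. Substituting $z = e^{2\pi i x}$ and matching each $z^n$ to its Fourier mode immediately produces two exponential sums in $k$ with $m$ terms each, one for each sign of $k$, with bases $u_j = e^{\alpha_j}$ on one branch and $\bar u_j = e^{\bar\alpha_j}$ on the other, where $\alpha_j = 2\pi i \eta_j$. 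This already has the structural form of \eqref{eq:Exp_Sum}.

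Finally, I would invoke the real-valuedness of $r_m$, which translates to the identity $\tilde r(z) = \overline{\tilde r(1/\bar z)}$ on $|z|=1$ and hence on all of $\mathbb{C}$ by analytic continuation. Matching residues at the pair $u_j, 1/\bar u_j$ in this identity forces a relation of the form $B_j = -\bar A_j/\bar u_j^{2}$, which collapses the $2m$ unknowns $(A_j, B_j)$ to $m$ complex parameters. Absorbing the constant prefactors produced by the Laurent expansions into a single $\omega_j$ per conjugate pair then yields exactly the two branches of \eqref{eq:Exp_Sum}, coupled by the conjugate symmetry $\omega_j \leftrightarrow \bar\omega_j$. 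I expect the main effort to lie in this last step's bookkeeping: tracking which member of each conjugate pair feeds which branch, and verifying that all the $e^{\mp \alpha_j}$ prefactors generated by the geometric expansions can be absorbed into a single $\omega_j$ without disturbing the required conjugate relation. The simplicity-of-poles hypothesis is essential throughout: repeated poles would introduce polynomial-in-$k$ prefactors and spoil the pure-exponential shape of \eqref{eq:Exp_Sum}, as in the extended setting of~\cite{derevianko2021exact}.
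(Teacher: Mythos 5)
Your argument is sound and is, in essence, the standard proof that the paper outsources to~\cite[Ch.~4]{stein2010complex} rather than writing out: pass to $z=e^{2\pi i x}$, check that $\tilde r=P/Q$ with $\deg P\le 2m-1$, $\deg Q=2m$, and $P(0)=0$, so the partial-fraction expansion has no polynomial part; expand each simple fraction geometrically on the annulus containing $|z|=1$; and use the reflection identity $\tilde r(z)=\overline{\tilde r(1/\bar z)}$ to tie the residues at $u_j$ and $1/\bar u_j$ together. Each of these steps checks out; in particular the residue relation $B_j=-\bar A_j/\bar u_j^{\,2}$ is correct, and after setting $\nu_j=A_j/u_j$ the two branches collapse to $\sum_j \nu_j u_j^{-k}$ for $k<0$ and $\sum_j\overline{\nu_j}\,\overline{u_j}^{\,k}$ for $k\ge 0$, a conjugate-symmetric pair of $m$-term exponential sums with decaying bases. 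Since the paper gives only a citation here, your write-up is a legitimate self-contained substitute.

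The one place you should not defer as ``bookkeeping'' is the final matching, because carried out honestly it does not land on~\eqref{eq:Exp_Sum} verbatim. Under the Fourier convention used elsewhere in the paper ($R_m(k)$ multiplies $e^{+2\pi i kx}$; see the inverse-transform formula in \cref{sec:IFT}), your expansion puts the base $\overline{u_j}=e^{\overline{\alpha_j}}$ on the $k\ge 0$ branch (it comes from the exterior poles $1/\bar u_j$) and $u_j=e^{\alpha_j}$ on the $k<0$ branch --- the conjugates sit on the opposite branches from the printed statement. This cannot be absorbed into the $\omega_j$, since the exponents of a nondegenerate exponential sum are determined by its values; the two versions coincide only when the pole set is symmetric about ${\rm Re}(\eta)=1/2$. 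A quick check with the Poisson kernel $1/(\cosh(2\pi b)-\cos(2\pi(x-a)))$, whose poles are $a\pm ib$ and whose coefficient of $e^{2\pi i kx}$ is proportional to $e^{-2\pi b|k|}e^{-2\pi i ak}=e^{\overline{\alpha}k}$ for $k\ge0$, confirms your version rather than the printed one. So either the lemma implicitly takes $(\hat r_m)_k$ to be the coefficient of $e^{-2\pi i kx}$, or there is a conjugation slip in its statement; your closing paragraph should say explicitly which reconciliation you adopt instead of asserting that the forms already agree. The discrepancy is harmless downstream --- the poles come in conjugate pairs, so the content is identical --- but a complete proof must commit to one convention and match it.
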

\begin{proof}  See~\cite[Ch.~4]{stein2010complex}.  
\end{proof}
\Cref{Lemma:Exponential_sum_Fourier_coeffs} shows that the Fourier series of $r_m$  can be represented as sum of $m$  weighted decaying exponentials. We say that $\mathcal{F}(r_m) = R_m$ is the Fourier transform of $r_m$, and similarly, $\mathcal{F}^{-1}(R_m) = r_m$ is the inverse Fourier transform of $R_m$.  

Third, trigonometric rationals can be used for feature detection.  For example, in \cref{fig:pole_plot}, we plot the poles $z_j = e^{2 \pi i \eta_j}$, $Im(\eta_j) > 0$, of the rational function 
\begin{equation}
\label{eq:zt_form}
\tilde{r}_m(z) = \dfrac{z^m p_{m-1}(z)}{z^m q_{m}(z) }, \quad z = e^{2 \pi i x}, 
\end{equation}
where $ r_m(x) = p_{m-1}(x)/ q_m(x)$ is a trigonometric rational approximation to a shifted and scaled uniform cubic B-spline~\cite{de1972calculating} $f$ with knots at  \hbox{$x =\{1/6, \ldots, 5/6\}$}. The knots are not easily identifiable in a plot of $f$, but the poles of $\tilde{r}_m$, which are chosen adaptively via \cref{alg:apm}, cluster toward the singularities and reveal their locations. Using the five poles in $\{ z_j\}_{j = 1}^{m}$ with the largest magnitudes, estimates of the knot locations are given by 
$ \tilde{x}_k =  {\rm arg} (z_k)/ 2 \pi $ for $1 \leq k \leq 5$.   
 The radial coordinates of the poles in \cref{fig:pole_plot} also encode information about $f$ in the frequency domain. The Fourier coefficient  $\hat{f}_k$ is well-approximated by $(\hat{r}_m)_k = R_m(k)$, where $R_m$ is as in~\cref{eq:Exp_Sum}. The terms in $R_m$ with $|z_j| \ll 1$ have negligible influence when $k$ is small, so they capture aspects of the signal that are only observable at low frequencies. 

\subsection{Barycentric trigonometric rational functions} \label{sec:bary}
Large numerical errors can be incurred when evaluating trigonometric rationals that are numerically constructed using~\cref{eq:Rationalpqform} directly~\cite{filip2018rational, trefethen2013approximation}. Another natural way to represent $r_{m}$ is in a pole-residue format with respect to the pairs of poles $(z_j = e^{2 \pi i \eta_j}, 1/\overline{z}_j)$ of the rational function $\tilde{r}_m$ in~\cref{eq:zt_form}.  Evaluation  using this format is often reliable in practice, but it can potentially result in catastrophic cancellation if the evaluation point is too close to the poles. 
In the non-periodic setting, the AAA algorithm~\cite{nakatsukasa2018aaa}  safeguards against such instabilities by using barycentric rational interpolants with backward-stable evaluation on the interval of approximation~\cite{austin2017numerical, higham2004numerical}. Type $(m\!-\!1,m\!-\!1)$ barycentric rational interpolants of a function $f$ on $[0, 1)$ have the form
\begin{equation} 
\label{eq:std_bary}
v_m^{\gamma, t} = \dfrac{\tilde{n}_{m}(x)}{\tilde{d}_m(x)} =  \sum_{j = 1}^{m} \dfrac{\gamma_j }{x-t_j}f(t_j) \Bigg/ \displaystyle{\sum_{j = 1}^m} \dfrac{\gamma_j}{x-t_j} , 
\end{equation}
where for  $1\leq j \leq m$, $t_j \in [0, 1)$. The trigonometric analogue of these interpolants is described in~\cite{henrici1979barycentric}, and more recently, in~\cite{baddoo2020aaatrig, javed2016algorithms}.  Type $(m\!-\!1, m)$ barycentric trigonometric rationals on $[0, 1)$ take the following form: 
 \begin{equation}
 \label{eq:BarycentricRat}
 r_m^{\gamma, t}(x) = \dfrac{n_{m-1}(x)}{d_m(x)} = \dfrac{ \sum_{j = 1}^{2m} \gamma_j f_j \cot\left(\pi (x-t_j)\right)}{ \sum_{j = 1}^{2m} \gamma_j \cot\left(\pi (x-t_j)\right)}, \qquad \sum_{j = 1}^{2m} \gamma_jf_j = 0.
 \end{equation}
The interpolating points $t = \{t_1, \ldots, t_{2m}\}$, always assumed to be  distinct, are called barycentric nodes,  $\gamma = \{\gamma_1, \ldots, \gamma_{2m}\}$ are called barycentric weights, and $f_j = f(t_j)$.
 It is easily shown that $r_m^{\gamma, t}(t_j) = f_j$ whenever each $\gamma_j$ is nonzero~\cite{berrut1988rational}, but it is not obvious from~\cref{eq:BarycentricRat} that $r_m^{\gamma, t}$ is in fact a type $(m-1, m)$ trigonometric rational. This can be seen by using the trigonometric polynomial \smash{$\ell_t(x) = \prod_{j = 1}^{2m}\sin(\pi(x -t_j))$}, and rewriting $r_m^{\gamma, t}$ as \smash{$ \ell_t n_{m-1}  / \ell_t d_m $} (see~\cite{henrici1979barycentric}).  The condition $\sum_{j = 1}^{2m} \gamma_j f_j = 0$ enforces that the numerator is a trigonometric polynomial of degree $m\!-\!1$, rather than $m$. It is not clear from~\cref{eq:BarycentricRat} where the poles of $r_m^{\gamma, t}$ lie, and in particular, whether they lie off $[0, 1)$ (see \cref{sec:pronyAAA}).  Stability properties (and other advantages) of the barycentric form were popularized in the context of polynomial interpolation~\cite{berrut2004barycentric}, where  $\gamma$ is always chosen so that $d_m (x)= 1$. Analyses can be found in~\cite{austin2017numerical, higham2004numerical}, with discussion on stability properties in the more general case, where $d_m(x) \neq 1$,  in~\cite[Sec.~2.2--2.3]{filip2018rational}.  

\subsection{Approximations in time} \label{sec:pronyAAA}
 In the noiseless setting, the barycentric rational $r_m^{\gamma, t}$ can be directly constructed from samples of $f$ using an analogue of the AAA algorithm (pronyAAA) that we introduce here.  This supplies a fast, automated way to construct trigonometric rational representations of signals.  
 A major advantage of AAA-type methods over other approximation schemes is that they can be blithely applied to samples from non-uniform grids and can even be used for recovering functions defined on disjoint sets of support~\cite{nakatsukasa2018aaa}.  In particular, these methods are robust to missing samples. In \cref{fig:missing_data}, we use pronyAAA to recover a function from data that has been deleted in several corrupted regions. 

\begin{figure}
\centering
\hspace{.2cm}
    \begin{minipage}{.45\textwidth} 
 \centering
  \begin{overpic}[width=\textwidth]{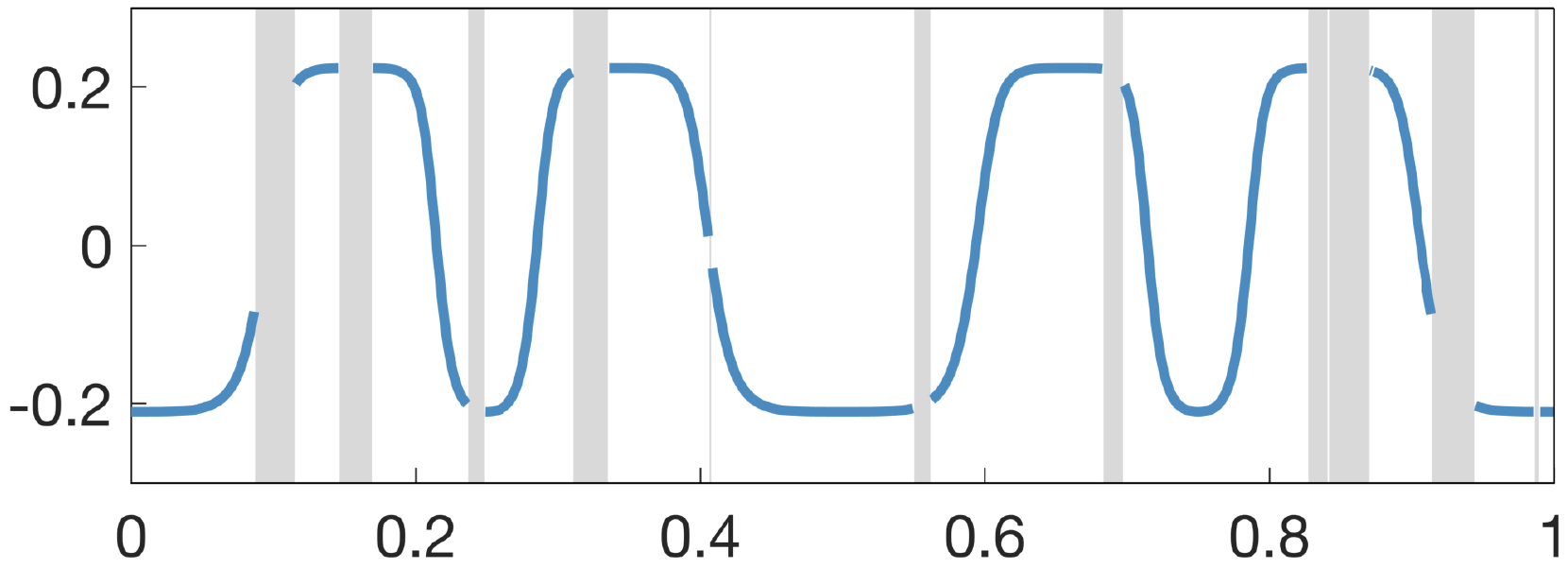}
  \end{overpic}
    \begin{overpic}[width=\textwidth]{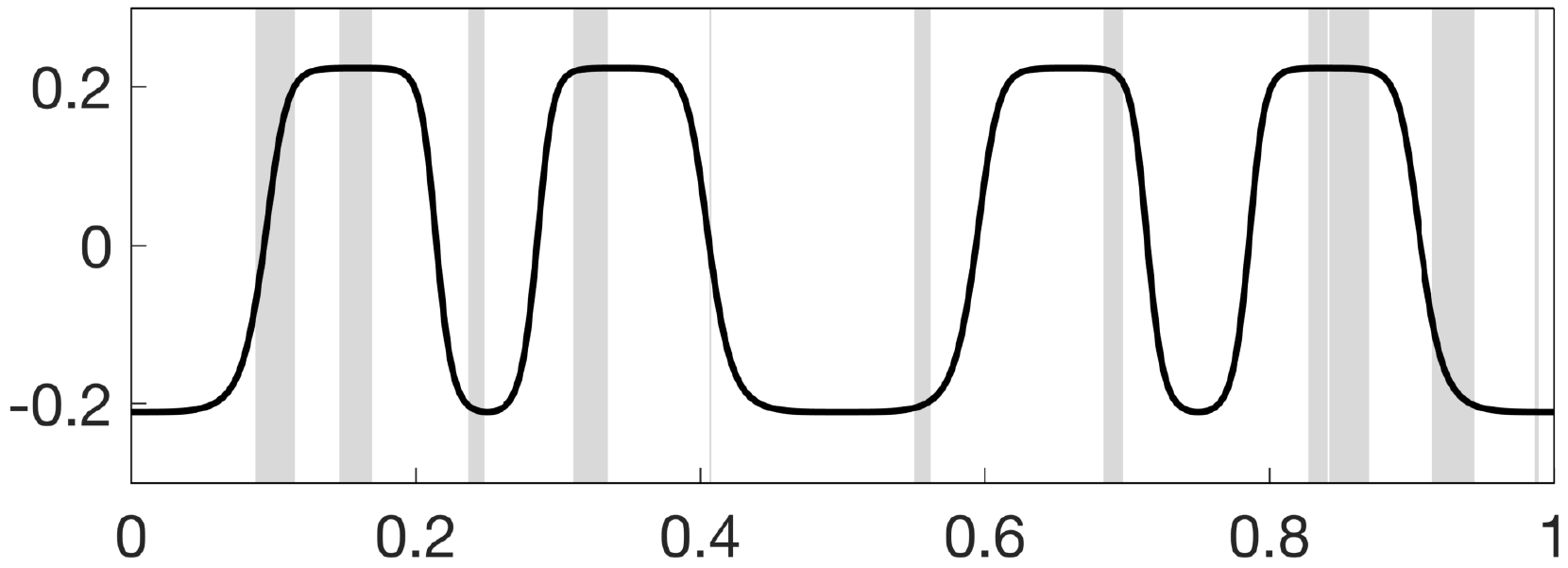}
   \put(-7, 22){\rotatebox{90}{amplitude}}
    \put(50, -5){\rotatebox{0}{$x$}}
  \end{overpic} 
  \end{minipage}
  \hspace{.2cm}
      \begin{minipage}{.45\textwidth} 
 \centering
  \begin{overpic}[width=.9\textwidth]{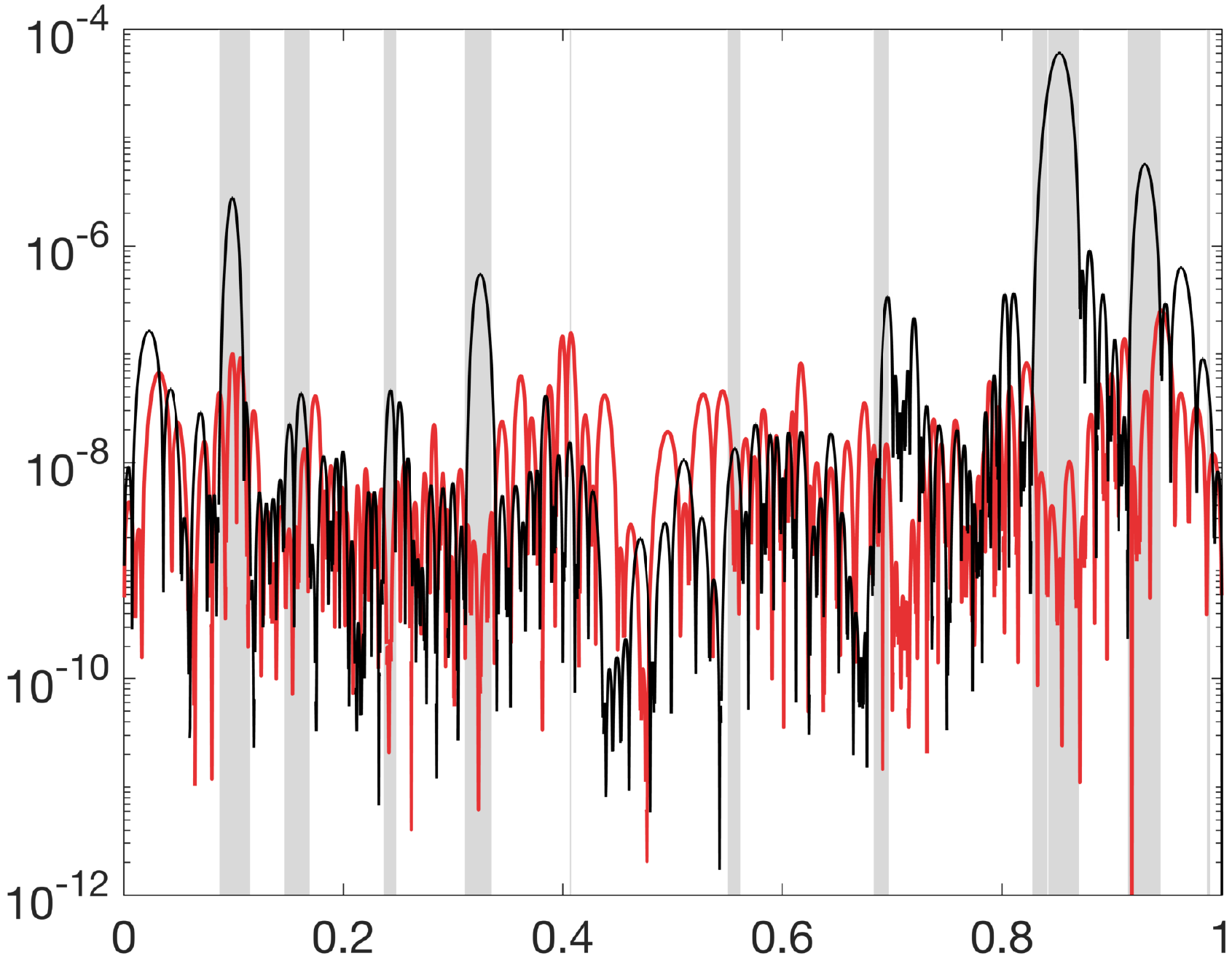}
  \put(-6, 35){\rotatebox{90}{error}}
  \put(50, -5){\rotatebox{0}{$x$}}
  \end{overpic}
  \end{minipage}
  \caption{ Left: $3000$ samples from a function $f$  are taken on an equally spaced grid from $[0, 1)$. However, the data inside the grey regions are corrupted and thus deleted from the sample (a total of $609$ observations are missing). The available data are plotted (blue, upper panel) and used to construct a type $(34, 35)$ trigonometric barycentric rational interpolant $r_{35}$ via pronyAAA. The resulting approximant, evaluated on an equally spaced grid and plotted (black, lower panel), imputes the missing data in the grey regions. Right: The absolute error $|f(x) - r_{35}(x)|$  is plotted in black on a logarithmic scale against the original grid of sampled points. The absolute error when pronyAAA is applied to an uncorrupted sample of $f$ with no missing data is also displayed (red). } 
  \label{fig:missing_data}
  \end{figure}

 \subsubsection{Constructing barycentric trigonometric interpolants}
\label{sec:trigAAA} 
 Our construction algorithm is a straightforward extension of the AAA algorithm (see \cref{alg:AAA}), with differences being the use of trigonometric basis functions, the restriction that the numerator is always of degree $m\!-\!1$, and the requirement that the constructed interpolant has an even number of interpolating points.\footnote{This is so that  $r_m^{\gamma, t}$ may have an even number of simple poles.} These restrictions are so that the approximant corresponds to a length $m$ exponential sum in Fourier space. As with AAA, pronyAAA employs a greedy residual minimization method to select interpolating points and iteratively build up an interpolant. We briefly discuss the process here, and refer to~\cite{nakatsukasa2018aaa} for details.

Let $\{f_0, \ldots, f_{2N}\}$ be the samples of a function $f$ we seek to approximate,  where
 $T = \{x_j\}_{j = 0}^{2N} \subset [0, 1)$ are the sample locations and $f(x_j) = f_j$. We first describe how  the barycentric weights are updated at each iteration. Suppose that at the $m^{\text{th}}$ iteration, the barycentric support points for $r_{m}^{\gamma, t}(x)$ in~\cref{eq:BarycentricRat} are \hbox{$t = \{t_1, \ldots, t_{2m}\} \subset T$}. Let $\widetilde{T} = T \setminus \{ t_1, \ldots, t_{2m}\}$. We must now choose $\{ \gamma_j\}_{j = 1}^{2m}$.   Let \hbox{$\gamma = (\gamma_1, \ldots, \gamma_{2m})^T$}. As in~\cref{eq:BarycentricRat}, $r_{m}^{\gamma, t} = n_{m-1}/ d_m$, where $n_{m-1}$, $d_m$ are trigonometric polynomials.  We select  $\gamma$ as the solution to the following constrained optimization problem: 
 \begin{equation}
 \label{eq:AAAopt}
 \min_{\gamma \in \mathbb{C}} \sum_{x_j \in \widetilde{T}} \left( f(x_j) d_{m} (x_j)-n_{m-1}(x_j)  \right)^2, \quad \textnormal{s.t.  } \sum_{j = 1}^{2m} f(t_j)\gamma_j = 0, \quad \|\gamma\|_2 = 1.
 \end{equation} 
This  is similar to the constraint applied in the standard AAA algorithm (see step 2, line (iii), of \cref{alg:AAA}), except that it additionally requires  $n_{m-1}$ to be of  degree $m\!-\!1$.
The constraint is satisfied if we select a vector of the form $\gamma = Q \tilde{\gamma}$, $\|\tilde{\gamma}\|_2 = 1$, where  $Q$ is a $2m  \times  (2m\!-\!1)$ matrix with orthonormal columns that span the space orthogonal to the vector $(f(t_1), \ldots, f(t_{2m}))$.  
Since for each $x_j \in \widetilde{T}$, 
\begin{equation} 
\label{eq:linearizedOpt}
f(x_j) d_{m} (x_j)-n_{m-1}(x_j)  = \sum_{\ell = 1}^{2m} \left( f(x_j) -f(t_\ell) \right)  \gamma_\ell \cot(\pi x_j -  \pi t_\ell), 
\end{equation}
 we see that $\tilde{\gamma}$ is given by the last right singular vector of the matrix $CQ$, where $C \in \mathbb{R}^{|\widetilde{T}| \times 2m}$ has entries of the form \smash{$C_{j \ell} = (f(x_j) - f(t_\ell)) \cot (\pi x_j - \pi t_\ell)$}, where each \smash{$x_j$} is a unique member of \smash{$\widetilde{T}$}. 

Once $\gamma$ is computed, we have constructed $r_{m}^{\gamma, t}$ and the iteration is complete.  If the error \smash{$ \max_{x_j \in \widetilde{T}} |f(x_j)-r_{m}^{\gamma, t}(x_j) |$} is not sufficiently small, we begin the next iteration by choosing two additional interpolating points (as we require an even number of interpolating points) from $\widetilde{T}$. 
It is not clear how to efficiently choose two points in a single step, so we introduce an interim step. The first point is chosen as \smash{$t_{2m+1} = {\rm argmax}_{x_j \in \widetilde{T}} |f(x_j) - r_{m}^{\gamma, t}(x_j)|$}, and then $t$ and $\widetilde{T}$ are updated appropriately. To pick the second point, we construct an interim trigonometric interpolant \smash{$r_{m+1/2}^{\gamma, t}$} with an odd number of interpolating points, using the basis functions \smash{$\csc(\pi (x - t_j))$} instead of \smash{$\cot(\pi (x - t_j)).$}  The different basis functions ensure that \smash{$r^{\gamma, t}_{m+1/2}$} is a trigonometric rational: see~\cite{henrici1979barycentric}. The weights in \smash{$r_{m+1/2}^{\gamma, t}$} are updated by solving a problem similar to~\cref{eq:AAAopt}, though the constraint differs due to the different basis functions~\cite{henrici1979barycentric}. The second interpolating point is then chosen as \smash{$t_{2m+2} = {\rm argmax}_{{x_j}\in \widetilde{T}} |f(x_j) - r_{m+1/2}^{\gamma, t}(x_j)|$}. A different strategy for choosing the second point at each step is used in~\cite{baddoo2020aaatrig}, but we find that such a choice adversely impacts convergence in our setting.\footnote{One might wonder if it is possible to choose the interpolation points so that the pole locations are restricted (or symmetries in pole locations are enforced). Such a challenge is of interest in rational-based numerical methods~\cite{costa2021aaa, xu2013bootstrap}. The Floater--Hormann interpolants are one such example~\cite{berrut2005recent}, but they have slower convergence rates than AAA-based interpolants.}

 \begin{algorithm}[t!]
\caption{The standard AAA algorithm.}
\label{alg:AAA}
\begin{algorithmic}
\STATE \hspace{-.45cm} 
\STATE \textbf{Input:}  tolerance parameter $\epsilon$, sample locations $T = \{ x_0, \ldots, x_{2N}\}$,  samples $\{f(x_0),  \ldots, f(x_{2N}) \}$. 
\STATE \textbf{Output:} barycentric support points $t$ and weights $\gamma$ defining $v_m^{\gamma, t}$ in~\eqref{eq:std_bary}. 
\vspace{.2cm} 
\STATE 1. Set $err = 1$, $m = 0$, $t= \{ \}$, $\widetilde{T} = T$, $v_0^{\gamma, t} = 0$. 
\vspace{.2cm} 
\STATE 2. while $err > \epsilon$
\STATE \hspace{1cm} (i)  Set \smash{$t_{m+1}= {\rm argmax}_{x_j \in \widetilde{T} } \{ |f(x_j)-v_m(x_j)|\}$}. 
\STATE \hspace{1cm} (ii)  $t \leftarrow t \cup \{t_{m+1}\}$, $\widetilde{T} \leftarrow T\setminus t$. 
\STATE \hspace{1cm}  (iii) With \smash{$\tilde{n}_{m+1}$}, \smash{$\tilde{d}_{m+1}$} as in~\eqref{eq:std_bary}, choose $\gamma = (\gamma_1, \ldots, \gamma_{m + 1})^T$ such that it 
\STATE \hspace{1cm} solves the following constrained optimization problem: 
\STATE \hspace{2cm} minimize $ \sum_{j = 1}^{|\widetilde{T}|} \left( f(x_j) \tilde{n}_{m+1}(x_j) - \tilde{d}_{m+1}(x_j) \right)^2$  s.t. $\|\gamma\|_2 = 1$.
\STATE   \hspace{1cm} (iv) Set $err \leftarrow \max_{ x_j \in \widetilde{T} }|f(x_j) - v_{m+1}^{\gamma, t}(x_j)|$.
\STATE   \hspace{1cm} (v) $m \leftarrow m+1$.
\STATE \phantom{2.} end while
\STATE 4. Compute poles and residues (see \cref{sec:Roots} and~\cite[eq.~3.11]{nakatsukasa2018aaa}). 
\STATE 5. If there are spurious poles, apply cleanup (see \cref{cleanup}  and~\cite[sec.~4]{nakatsukasa2018aaa}). 

\end{algorithmic}
\end{algorithm}

\subsubsection{Spurious poles} \label{cleanup} 
As with AAA, nothing in the pronyAAA algorithm directly controls where the poles of $r_m^{\gamma, t}$ occur. The advantage of this approach is that, unlike fixed-pole approximation methods, the pole locations are adaptively determined and automatically cluster in patterns that are highly effective for resolving singularities~\cite{trefethen2021exponential}. However, one drawback to this approach is that the conjugate-pair symmetry of the poles is not explicitly enforced. A more problematic issue is that so-called ``spurious poles" may appear on the interval of approximation. 

Poles that are undesirable or unnecessary are called spurious~\cite{berrut1988rational, nakatsukasa2018aaa}.  They arise as artifacts related to numerical error, but can also appear within mathematically valid solutions to an interpolation problem constrained by~\cref{eq:AAAopt}. For example, a spurious pole may co-occur with a nearby zero that effectively cancels out its influence except within a small interval $I$, where $I \subset [x_j, x_{j+1}]$ for some neighboring sample locations $x_j < x_{j+1}$, so that for $x \notin I$ it holds that $|r_m^{\gamma, t}(x) - f(x)| < \epsilon$. This is a perfectly acceptable way to solve~\cref{eq:AAAopt}, but  it leads to a solution where $\|r_m^{\gamma, t} - f\|_\infty $ is unbounded.  

 \begin{algorithm}[t!]
\caption{The pronyAAA algorithm.}
\label{alg:pronyAAA}
\begin{algorithmic}
\STATE \hspace{-.45cm} 
\STATE \textbf{Input:}  tolerance parameter $\epsilon$, sample locations $T = \{ x_0, \ldots, x_{2N}\}$,  samples $\{f(x_0),  \ldots, f(x_{2N}) \}$. 
\STATE \textbf{Output:} barycentric support points $t$ and weights $\gamma$ defining  $r_m^{\gamma, t}$ in~\cref{eq:BarycentricRat}. 

\vspace{.2cm} 

\STATE 1. Set $m = 1$,  $t = \{ t_1 = {\rm argmax }_{x_j \in T}|f(x_j)|, t_2 =  {\rm argmax}_{x_j \in T\setminus t_1}|f(x_j)|\}$, $\widetilde{T} = T \setminus t$. \\
2. Solve for $\gamma$ as in \cref{eq:linearizedOpt} to define $r_1^{\gamma, t}$. Set $err = \max_{x_j \in \widetilde{T}} |f(x_j) - r_1(x_j)|$.

\vspace{.2cm}

\STATE 3. While $err > \epsilon$
\STATE \hspace{1cm} for $\ell = 1, 2$ 
\STATE \hspace{2cm}  (i) Set $t_{2m+\ell} =  {\rm argmax}_{x_j \in \tilde{T}}|f(x_j) - r_{m+ (\ell-1)/2} ^{\gamma, t}| $. 
\STATE \hspace{2cm}  (ii) $t \leftarrow \{ t_1, \ldots, t_{2m+\ell}\}$,   \hspace{.1cm}  $\widetilde{T} \leftarrow \widetilde{T} \setminus t_{2m+\ell}$.
\STATE \hspace{2cm}  (iii) Update $\gamma = (\gamma_1, \ldots, \gamma_{2m+\ell})^T$ by solving a constrained \\ \hspace{2cm} optimization problem on $\widetilde{T}$ (see \cref{eq:linearizedOpt} and discussion). 
\STATE \hspace{1cm} end for
\STATE   \hspace{1cm} $err \leftarrow \max_{x_j \in \widetilde{T}}|f(x_j) - r_{m+1}^{\gamma, t}(x_j)|$.
\STATE   \hspace{1cm} $m \leftarrow m+1$.
\STATE \phantom{2.} end while

\vspace{.2cm} 

\STATE 4. Compute poles and residues (see \cref{sec:Roots}). 

\STATE 5. If there are spurious poles, apply cleanup routine (see \cref{cleanup}). 
\end{algorithmic}
\end{algorithm}

 Spurious poles are eliminated in the standard AAA algorithm with an additional cleanup routine~\cite{nakatsukasa2018aaa}, which we adapt to our setting. Pairs of spurious poles are detected via their small residues or by the fact that they are real-valued. The barycentric nodes nearest to the spurious poles are eliminated. This forces the degree of the interpolant to decrease and requires the barycentric weights to be recomputed, which changes the number and location of the poles.  When poles cannot be eliminated without destroying the approximation, it is usually because the function being interpolated is not well-approximated by low to moderate type $(m\!-\!1, m)$ trigonometric rationals. We observe this, for example, when pronyAAA is applied to samples perturbed by additive Gaussian noise. An example is described in \cref{sec:ECG} that involves the reconstruction of ECG data from $645$ samples. The direct application of  pronyAAA to the noisy data is problematic. It results in $62$ spurious poles appearing on $[0, 1)$ that create artificial peaks in the signal reconstruction. To overcome this, we need a method of rational approximation that is robust to Gaussian noise. 
 
 \subsection{Approximations in Fourier space} \label{sec:Pronyappx}
The pronyAAA algorithm constructs trigonometric rational representations to signals directly from samples, but its exclusive use is inadequate in many practical settings. For example, one cannot apply it in the presence of Gaussian noise, and the barycentric form is not conducive to efficient recompression techniques (see \cref{sec:comp}). These issues can often be remedied by representing $f$ in Fourier space using the exponential sums in~\cref{eq:Exp_Sum}. 

To construct the sums, we require the Fourier coefficients of $f$. We use the fast Fourier transform (FFT) to compute the coefficients $v = (\hat{f}_0 , \ldots, \hat{f}_{N})^T$ associated with samples $\{f(x_j)\}_{j = 0}^{2N}$, where $x_j = j/(2 N+1)$. When $f$ is not a bandlimited function (or has a bandlimit $> N$), this process introduces error into the Fourier coefficients. We describe the error with the following notion:
\begin{definition}[$\epsilon$-resolution]
For  $0 < \epsilon < 1$, the $\epsilon$-resolution of $f$ is the smallest non-negative integer $N_\epsilon$ such that  
$$ \| f - f_{trunc}\|_\infty \leq \epsilon,$$
where $f_{trunc}$ is the best $\mathcal{L}^{\infty}([0, 1))$ projection of $f$ onto the functions of bandlimit $N_\epsilon$. 
\end{definition}
For bandlimited functions, $N_0$ is the bandlimit of $f$. The $\epsilon$-resolution for non-bandlimited functions can be understood in relation to the smoothness of $f$ and its region of analyticity in the complex plane~\cite{trefethen2013approximation}. In our setting, the assumption is that $N_\epsilon$ is large. However, when $f$ is well-approximated by a type $(m\!-\!1, m)$ trigonometric rational, \cref{Lemma:Exponential_sum_Fourier_coeffs} indicates that $\mathcal{F}(f)$ can be represented with far fewer degrees of freedom via the exponential sum $R_m$. One way to find $R_m$ is by fitting the nonlinear model \smash{$R_m(k) =\sum_{j = 1}^{m} \omega_j e^{\alpha_j k}$} to the Fourier coefficients $\hat{f}_k$, $0 \leq k \leq N$. We emphasize that  $m$ is unknown in the general setting and must be determined adaptively. 

\subsubsection{Regularized Prony's method} To construct $R_m$, we follow an idea in~\cite{beylkin2009nonlinear} and use the regularized version of Prony's method (RPM) from~\cite{beylkin2005approximation}. Variants of this method go by many names across various disciplines, and we refer to~\cite{potts2013parameter} for an overview. 
The problem of finding $R_m$ can be recast as a structured low rank approximation problem involving Hankel matrices. One can understand the connection using the following lemma, a version of which was first proven by Prony in 1795~\cite{prony1795essai}. 

\begin{lemma} 
\label{Lemma:Prony}
Let $N$ be an even integer.\footnote{The statement and proof can be adjusted to account for odd $N$~\cite{beylkin2005approximation}.} Let $R_m$ be as in~\cref{Lemma:Exponential_sum_Fourier_coeffs} and let $H_{R_m}$ be an $(N/2+1)  \times (N/2+1) $ Hankel matrix with entries $(H_{R_m})_{k \ell} = R_m (k + \ell)$, \hbox{$0 \leq k, \ell \leq N/2$}, where $N \geq 2m$. Let \smash{$\mathscr{P}_m(z) =\sum_{j = 0}^{m} c_j z^j$} be a polynomial with  roots \smash{$z_k= e^{\alpha_k}$}, $1 \leq k \leq m$. Then, $ {\rm rank}( H_{R_m} ) = m$, and  the null space of $H_{R_m}$ is spanned by $\{ c, Sc, \ldots, S^{N/2-m}c\},$
where \smash{$c = (c_0, \ldots, c_m, 0, \ldots, 0)^T$} and $S$ is the forward shift matrix. 
\end{lemma}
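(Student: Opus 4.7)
The plan is to exploit a low-rank Vandermonde factorization of $H_{R_m}$ and then read off the kernel directly from the polynomial $\mathscr{P}_m$.

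First I would write the Hankel entries using the exponential-sum form of $R_m$. Setting $z_j = e^{\alpha_j}$ and $D = \operatorname{diag}(\omega_1,\dots,\omega_m)$, one has
\begin{equation*}
(H_{R_m})_{k\ell} = R_m(k+\ell) = \sum_{j=1}^m \omega_j z_j^{k+\ell} = \sum_{j=1}^m z_j^k\,\omega_j\,z_j^\ell,
\end{equation*}
so $H_{R_m} = V D V^T$, where $V \in \mathbb{C}^{(N/2+1)\times m}$ is the Vandermonde matrix $V_{k,j}=z_j^{k}$. Because the poles are simple the nodes $z_j$ are distinct, and because $N\geq 2m$ we have $N/2+1\geq m+1$, so $V$ has full column rank $m$; since $\omega_j\neq 0$ for all $j$ (otherwise $R_m$ would be a sum of fewer than $m$ exponentials, contradicting the fact that $r_m$ has exactly $2m$ poles), $D$ is nonsingular. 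Hence $\operatorname{rank}(H_{R_m})=m$.

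Next I would verify that $c \in \ker H_{R_m}$. Writing $c=(c_0,\dots,c_m,0,\dots,0)^T$ and computing the $k$-th entry of $H_{R_m}c$ gives
\begin{equation*}
(H_{R_m}c)_k = \sum_{\ell=0}^{m} R_m(k+\ell)\,c_\ell = \sum_{j=1}^m \omega_j z_j^k \sum_{\ell=0}^m c_\ell z_j^\ell = \sum_{j=1}^m \omega_j z_j^k\, \mathscr{P}_m(z_j) = 0,
\end{equation*}
since each $z_j$ is a root of $\mathscr{P}_m$. The shifted vectors $S^p c$ with $0 \leq p \leq N/2-m$ lie in the space because $H_{R_m}$ is Hankel: the identical computation, reindexed by $\ell \mapsto \ell-p$, again reduces to evaluating $\mathscr{P}_m$ at the $z_j$. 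Since $c_m\neq 0$, the vectors $c, Sc, \dots, S^{N/2-m}c$ have their trailing nonzero entries at distinct positions $m, m+1, \dots, N/2$, and are therefore linearly independent.

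Finally I would conclude by a dimension count: the rank-nullity theorem gives $\dim\ker H_{R_m}=(N/2+1)-m=N/2-m+1$, which is exactly the number of independent vectors produced in the previous step, so they must span the kernel. The only mild subtlety worth checking carefully is the claim that all weights $\omega_j$ are nonzero (needed for the rank statement), which follows from the hypothesis that $r_m$ has exactly $2m$ simple poles in $\cref{Lemma:Exponential_sum_Fourier_coeffs}$; every other step is a direct computation.
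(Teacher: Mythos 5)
Your proof is correct and is the standard argument for Prony's lemma: the Vandermonde factorization $H_{R_m}=VDV^{T}$ gives the rank (using that the nodes $z_j$ are distinct and the weights $\omega_j$ are nonzero), membership of $c$ and its shifts in the kernel follows from $\mathscr{P}_m(z_j)=0$, and a dimension count finishes the spanning claim. The paper itself simply cites Lemma~2.1 of Potts--Tasche rather than proving the result, and the cited proof proceeds exactly along these lines, so your write-up is a faithful self-contained version of the same approach.
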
   
\begin{proof} See~\cite[Lem.~2.1]{potts2011nonlinear}.
\end{proof}
  
In our case, we seek  $R_m \approx \mathcal{F}^{-1}(f)$. \Cref{Lemma:Prony} indicates that this is equivalent to  finding a Hankel matrix $H_{R_m}$ of rank $m$ so that \hbox{$H_{R_m} \approx H_v$}, where  \hbox{$(H_v)_{jk} = \hat{f}_{j+k}$,} \hbox{$0\leq j,k \leq N/2$}. Moreover, it shows that the complex exponentials in~\cref{eq:Exp_Sum} are the roots of a special polynomial, often referred to as Prony's polynomial~\cite{potts2013parameter}, whose coefficients form a vector in the null space of $H_{R_m}$. The regularized Prony's method (RPM), described in pseudocode in~\cref{alg:apm}, finds a vector $c$ of polynomial coefficients in the numerical null space of $H_v$, i.e., $c$ such that $\|H_v c\|_2 \leq \epsilon$. Unlike in~\cref{Lemma:Prony}, $H_v$ is not exactly of rank $m$, so the polynomial $\mathscr{P}(z)$ with monomial coefficients given by the entries of $c$ generally has $N/2$ roots. Since we only want decaying exponentials in $R_m$, we keep only the $m$ roots with modulus $< 1$. The exponents $\{\alpha_j\}_{j = 1}^{m}$ for $R_m$ are determined from these roots, and a least squares fit to a subset of  Fourier coefficients of $f$ supplies the weights $\{\omega_j\}_{j = 1}^{m}$ (see \cref{alg:apm}).

A qualitative error bound in terms of the singular values of $H_v$ and more details about the RPM are given in~\cite{beylkin2005approximation}. In some settings, such as in the example in \cref{fig:whales}, a good choice for the tolerance parameter $\epsilon$ may be unclear. In this case, we modify \cref{alg:apm} so that $\epsilon$ is chosen automatically by detecting gaps in the small singular values of $H_v$ that indicate the presence of a numerical null space. \Cref{alg:apm} naively implemented has an $\mathcal{O}(N^3)$ cost because it requires finding the singular value decomposition (SVD) of $H_v$. This is improved if one finds the SVD with an algorithm that takes advantage of fast matrix-vector products for Hankel matrices (e.g., the randomized SVD~\cite{halko2011finding} and Lanczos-based methods~\cite{golub2012matrix}). 

\begin{algorithm}[t!]
\caption{The regularized Prony method.}
\label{alg:apm}
\begin{algorithmic}
\STATE \hspace{-.45cm} 
\STATE \textbf{Input:}  tolerance parameter $\epsilon$ and Fourier coefficients $v = (\hat{f}_0, \ldots, \hat{f}_N)^T$. 
\STATE \textbf{Output:} $\{(\omega_j, \alpha_j)\}_{j = 1}^m$ defining $R_m$ in~\cref{eq:Exp_Sum}, so that $|R_m(j) -\hat{f}_j|\approx \epsilon$. 

\vspace{.2cm} 

\STATE 1. Construct the Hankel matrix $H_v$, where $(H_v)_{j+k} = \hat{f}_{j+k}$ for $0 \leq j,k \leq N/2$. 
\STATE 2. Compute the SVD of $H_v$ to find $c$, where $\|H_v c \|_2 \leq \epsilon$, $\|c\|_2 = 1$. 
\STATE 3. Set $\mathscr{P}(z) = \sum_{\ell = 0}^{N/2} c_{\ell} z^\ell$.
\STATE 4. Find the  $m \leq N/2$ roots $\{z_j\}_{j =1}^{m}$ of $\mathscr{P}(z)$ with $|z_j|<1$. Set $\alpha_j = \log z_j $. 
\STATE 5. Compute the least-squares solution to $V\omega = v$, where $V_{jk} = z_{k+1}^{j}$, \hbox{$0\leq j\leq N$}, \hbox{$0 \leq k \leq m\!-\!1$}.\end{algorithmic}
\end{algorithm}

 \subsubsection{The regularized Prony method as a filter} 
 In practice, one expects that samples of $f$ are corrupted by noise. The RPM has a natural interpretation as a type of filter. Rather than, for example, filtering out the high-frequency components of a signal, it separates a signal into the sum of two parts by splitting the Hankel matrix $H_v$  into the sum $H_v = H_{R_m} + H_{\mathcal{N}}$. The first term encodes a sequence of coefficients that are well--approximated in Fourier space by a length $m$ sum of exponentials (and thus correspond to a trigonometric rational). The second term encodes a sequence of coefficients that are not well approximated by such an expression. This is referred to as an annihilating filter in the literature on signals with so-called finite rates of innovation~\cite{vetterli2002sampling}.  The example in \cref{fig:whales} displays noisy data collected by a hydrophone. The noise is not well represented by low to moderate degree trigonometric rationals, so this structured low rank approximation process filters it out.    
 
 \begin{figure}
\centering
    \begin{minipage}{.45\textwidth} 
 \centering
  \begin{overpic}[width=\textwidth]{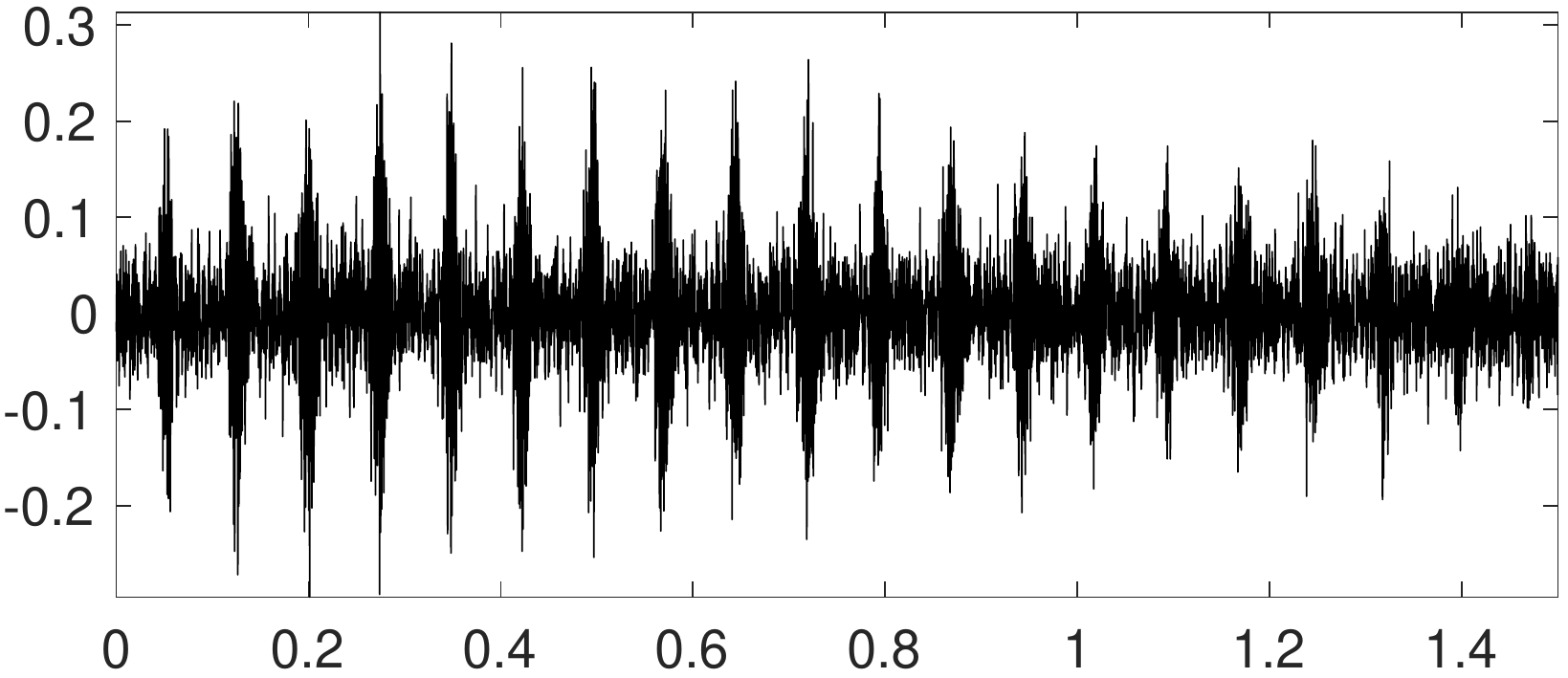}
  \end{overpic}
  \end{minipage}
  \centering
      \begin{minipage}{.45\textwidth} 
 \centering
  \begin{overpic}[width=\textwidth]{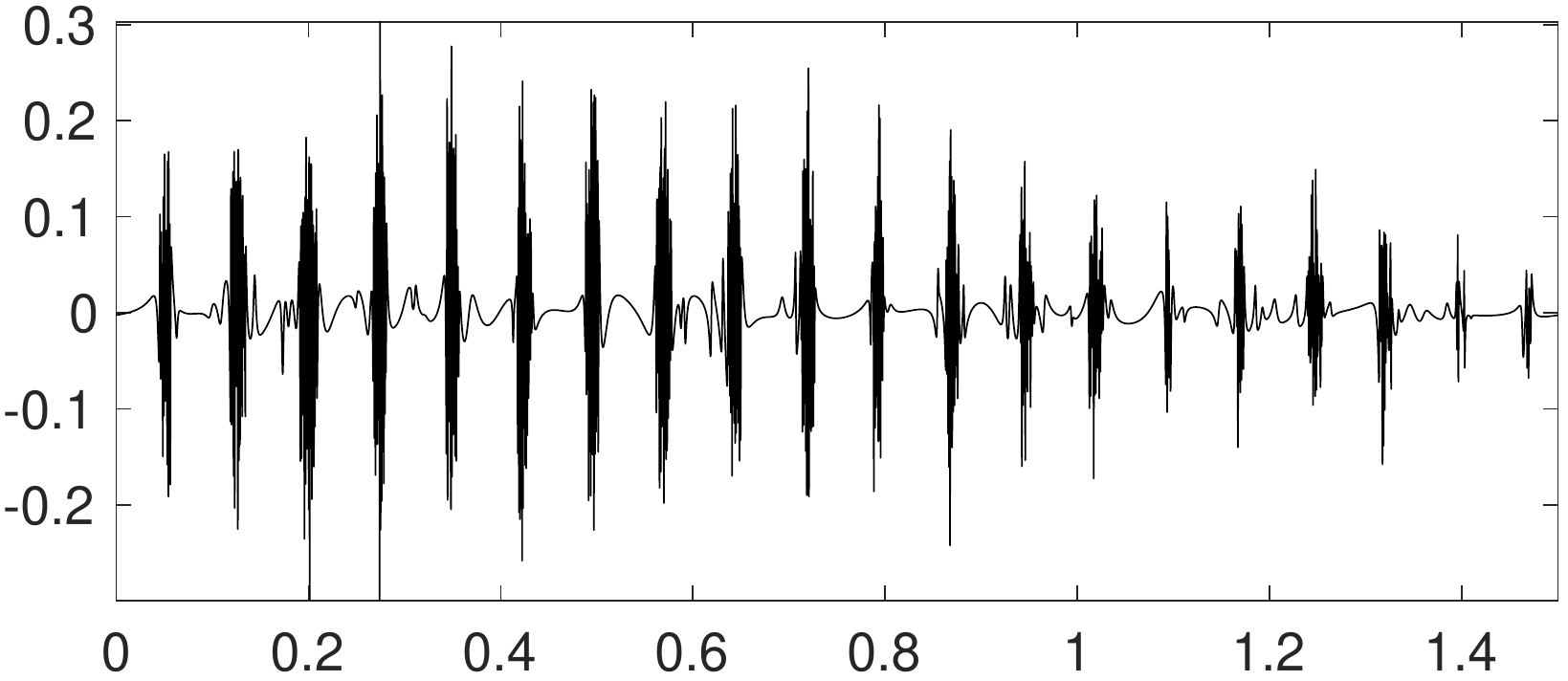}
  \end{overpic}
  \end{minipage}
  \caption{Left: A noisy recording of the trill portion of a Pacific blue whale's song. The sample consists of $6001$ equally spaced observations recorded over $1.5$ seconds~\cite{MATLABsig}. Right: A type $(274, 275)$ trigonometric rational approximation to the signal. The approximant is constructed by applying \cref{alg:apm} directly to the data with the tolerance parameter  $\epsilon = (2 \times 10^{-4}) \|H_v\|_2$. We additionally omit any terms in $R_m$ with weights smaller than $\epsilon$. The RPM filters out highly oscillatory noise that is not well-captured by trigonometric rationals, making it easier to identify the time-localized pulses in the trill. Once the approximant is constructed, one can toggle between a barycentric representation and the RPM-constructed representation as a sum of complex exponentials to perform various postprocessing tasks (see \cref{tab:fun}).} 
  \label{fig:whales}
  \end{figure}

\section{Fourier and inverse Fourier transforms} \label{sec:transforms}
The RPM and pronyAAA automatically construct compressed representations of  $f$, but these representations are very different from one another. This section describes Fourier/inverse Fourier transforms that allow us to move between these representations. If $R_m$ is a length $m$ sum of exponentials, the existence of a trigonometric rational $r_m = \mathcal{F}^{-1}(R_m)$ is guaranteed by~\cref{Lemma:Exponential_sum_Fourier_coeffs}. However, the lemma does not reveal if or how $r_m$ can be expressed in barycentric form. In the same way, given a trigonometric barycentric interpolant $r^{\gamma, t}_m$, it is not clear from~\cref{eq:BarycentricRat} how one can recover the sum of exponentials $R_m = \mathcal{F}(r^{\gamma, t}_m)$. 
In fact, the exact recovery of one representation from the other is an ill-conditioned problem.  With this in mind, we develop lossy but stable transform routines. In the REfit software~\cite{wilberREfit}, these transforms are accessed with the commands \texttt{ft} and $\texttt{ift}$. 

\subsection{ The forward transform} \label{sec:FT}
Given  $r_m^{\gamma, t}$ as in~\cref{eq:BarycentricRat}, we seek 
$$R_m(k) = \mathcal{F}(r_m^{\gamma, t}) (k) = \sum_{j = 1}^{m} \omega_j e^{\alpha_j k}, \quad k \geq 0.$$  The parameters of interest can be expressed explicitly  in terms of the poles and residues of $r_m^{\gamma, t}$. For each $j$, $\alpha_j = 2 \pi i \eta_j$ and \smash{$\omega_j = e^{-\eta_j}{\rm Res} ( r_m^{\gamma, t}(z), e^{\eta_j}),$} where 
\smash{$z = e^{2 \pi i x}$} and \smash{$\{\eta_j\}_{j = 1}^m$} are those poles of $r_m^{\gamma, t}$ with $Im(\eta_k)> 0$~\cite{beylkin2009nonlinear}.  However, using these formulas requires the accurate computation of the poles \smash{$ \eta_j$}, $1 \leq j \leq m$, and their residues. In general, this is  an ill-conditioned problem involving extrapolation off the interval of approximation. Trigonometric rationals with different pole configurations can behave almost indistinguishably on $[0, 1)$, so it is often hopeless to recover the pole locations from observations on the interval.  Known stability results depend on the poles of $r_m^{\gamma, t}$ being sufficiently well-separated from one another and the residues ${\rm Res} ( r_m^{\gamma, t}(z), e^{\eta_j})$ being bounded well away from zero~[Sec.~2]\cite{miller1970stabilized}. 
In our setting we assume that $r_m^{\gamma, t}$ is an approximation to a function $f$ that has algebraic singularities. Good resolution of these features is possible precisely because $r_m^{\gamma, t}$ has poles that cluster up near the singularities (see \cref{fig:pole_plot}). For this reason, we do not expect that one can compute the pole locations or residues with high accuracy.  The exact recovery of the parameters of $R_m$ from samples is a related problem that is also ill-conditioned~\cite{potts2013parameter, transtrum2010nonlinear}. 

Instead of trying to recover $R_m$  exactly, we apply a regularization that finds \hbox{$\tilde{R}_{\tilde{m}} \approx R_m$}, where $\tilde{m} \leq m$.\footnote{If one allows for some of the weights in $R_m$ to be zero, then one can construct sums of exponentials where it is always true that $\tilde{m} = m$.}  The poles of $r_m^{\gamma, t}$ can be approximately computed by solving a $(2m \!+\! 1) \times (2m \!+\! 1)$ generalized eigenvalue problem (see \cref{sec:Roots}). 
Suppose $\{\tilde{\eta}_k\}_{k = 1}^m$ are the computed poles with $Im(\tilde{\eta}_k)> 0$. We set $\tilde{\alpha}_k = 2 \pi i \tilde{\eta}_k$. Then, instead of computing $\{\omega_1, \ldots, \omega_m\}$ using the explicit formula,  we find a vector of weights $\tilde{\omega} = (\tilde{\omega_1}, \ldots, \tilde{\omega}_m)^T$ by solving the overdetermined  linear system $V_{\tilde{\alpha}} \tilde{\omega} = \hat{r}$, where $\hat{r}= [(\hat{r}_{m}^{\gamma, t})_{0} \cdots  (\hat{r}_{m}^{\gamma, t})_{M}]^T$ is a vector of Fourier coefficients of $r_m^{\gamma, t}$, and \hbox{$(V_{\tilde{\alpha}})_{j,k} = e^{\tilde{\alpha}_{k+1} j }$}, $0 \leq j \leq M$, $0 \leq k \leq m\!-\!1$. 
All of the Fourier coefficients of $\tilde{r}_{\tilde{m}} = \mathcal{F}(\tilde{R}_{\tilde{m}})$ are exactly produced by a length $m$ sum of exponentials with exponents $\tilde{\alpha}_k = 2 \pi i \tilde{\eta}_k $. (Some of the weights in the sum may vanish, and if so we reduce the length of the sum appropriately.)  In infinite precision, we would only need $\tilde{m}$ Fourier coefficients of $\tilde{r}_{\tilde{m}}$ to solve for the weights $\tilde{\omega}$ (see \cref{Lemma:Prony}). Instead, we must fit to the coefficients of the nearby rational $r_m^{\gamma, t}$, and so apply a modest level of oversampling. We then test the accuracy of $\tilde{R}_{\tilde{m}}$  against a randomized sample of the Fourier coefficients of $r_m^{\gamma, t}$, and  systematically increase $M$ as needed. It is typically sufficient to choose $M = 2m$.

Since finding \smash{$\{e^{\tilde{\alpha}_j}\}_{j=1}^{m}$} and solving $V_{\tilde{\alpha}} \tilde{\omega}= \hat{r}$ are each $\mathcal{O}(m^3)$ operations, the cost for computing $\tilde{R}_{\tilde{m}}$ is dominated by procuring an accurate sample $\hat{r}$. This is done first by evaluating $2 N_\epsilon +1$ samples of $r_m^{\gamma, t}$, where $N_\epsilon$ is the $\epsilon$-resolution of $r_m^{\gamma, t}$, on an equally spaced grid, and then applying an FFT.  By default, $\epsilon$ is taken to be near machine precision, and $N_\epsilon$ can be approximately found automatically using, for example, an adaptation of Chebfun's \texttt{chop} algorithm~\cite{aurentz2017chopping}. In total, computing $\tilde{R}_m$ from $r_m^{\gamma, t}$  requires $\mathcal{O}(N_\epsilon \log N_\epsilon + N_\epsilon m + m^3)$ operations.

\subsection{The inverse transform} \label{sec:IFT}
We now assume $R_m(k) = \sum_{j = 1}^m \omega_j e^{\alpha_j k}$ is given, where each  $ \alpha_j$ is distinct,  $Re(\alpha_j) < 0$, $\omega_j \neq 0$, and $k \geq 0$.  We seek an efficient representation for $r_m = \mathcal{F}^{-1}(R_m)$, which is defined to be 
$$r_m(x)  = \sum_{k = -\infty}^{-1} \overline{R_m(-k)}e^{ 2 \pi i xk} + \sum_{k = 0}^{\infty} R_m(k) e^{ 2 \pi i x k}.$$

It is not always true that $r_m$ is a type $(m-1, m)$ trignometric rational function.  However,  this result does hold under the additional assumption that $R_m(0)=0$, or, equivalently,  \smash{$\int_{0}^1r_m (x) {\rm d}(x)= 0$}. We take this to be the case, and our objective is to construct a barycentric interpolant $r_m^{\gamma, t}$ to  $r_m$. We show in the next lemma that for any set of distinct points $t = \{t_1, \ldots, t_{2m}\} \subset [0, 1)$, there is $\gamma$ such that $r_m^{\gamma, t} = r_m$.

 \begin{lemma}
\label{lemma:Interp_recovery}
Let $r_m$ be a  type $(m\!-\!1, m)$ trigonometric rational function with  simple poles that is real-valued, continuous, and periodic on $[0, 1)$.  Let $t \subset [0, 1)$ be a set of $2m$ distinct interpolating points. Then, there is a set of weights $\gamma$ such that the trigonometric barycentric interpolant $r_m^{\gamma, t}$   recovers $r_m$ exactly. 
\end{lemma}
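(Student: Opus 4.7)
My plan is to exhibit explicit weights $\gamma$ built from the reduced representation of $r_m$. Write $r_m = p_{m-1}/q_m$ with $p_{m-1}$ and $q_m$ trigonometric polynomials of degrees at most $m-1$ and $m$, respectively. Set $\mu_j := \prod_{k \neq j}\sin(\pi(t_j - t_k))$ (nonzero since the $t_j$ are distinct) and define
$$\gamma_j := \frac{q_m(t_j)}{\mu_j}, \qquad j = 1, \ldots, 2m.$$
These are well-defined and nonzero since $r_m$ is continuous on $[0,1)$, which forces $q_m(t_j) \neq 0$ for real $t_j$.

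To convert everything to honest trigonometric polynomials, I would introduce $\ell_t(x) := \prod_j \sin(\pi(x - t_j))$ (degree $m$) and observe that $\phi_j(x) := \ell_t(x)\cot(\pi(x - t_j)) = \cos(\pi(x - t_j))\prod_{k \neq j}\sin(\pi(x - t_k))$ is likewise a trigonometric polynomial of degree $m$, as noted in~\cref{sec:bary}. The barycentric form then rewrites as $r_m^{\gamma, t} = (\ell_t n_{m-1})/(\ell_t d_m)$ with $\ell_t d_m = \sum_j \gamma_j \phi_j$ and $\ell_t n_{m-1} = \sum_j \gamma_j f_j \phi_j$ both bona fide trigonometric polynomials of degree at most $m$. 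Since $\phi_j(t_\ell) = \delta_{j\ell}\mu_j$, the chosen $\gamma$ gives $(\ell_t d_m)(t_\ell) = q_m(t_\ell)$ and $(\ell_t n_{m-1})(t_\ell) = f_\ell q_m(t_\ell) = p_{m-1}(t_\ell)$ for every $\ell$. The side condition $\sum_j \gamma_j f_j = 0$ collapses to $\sum_j p_{m-1}(t_j)/\mu_j = 0$, a classical trigonometric Lagrange-type identity that holds because $p_{m-1}$ has degree strictly less than $m$.

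The main obstacle is upgrading pointwise agreement at the $2m$ nodes to the global identities $\ell_t d_m \equiv q_m$ and $\ell_t n_{m-1} \equiv p_{m-1}$. A degree-$m$ trigonometric polynomial carries $2m+1$ coefficients, so $2m$ samples alone do not determine it; the image of the linear map $\gamma \mapsto \sum_j \gamma_j \phi_j$ is only a $2m$-dimensional subspace of the $(2m+1)$-dimensional space of degree-$\leq m$ trigonometric polynomials. I would therefore need to argue that $q_m$ actually lies in this subspace---equivalently, that the periodic meromorphic function $q_m/\ell_t$ admits a partial-fraction expansion as a pure cotangent sum (with no additive constant). This is the delicate step, and I expect it to require using the structural assumption that $r_m$ has type $(m-1, m)$ with exactly $2m$ simple poles together with a careful residue/contour argument (the analogous statement for $p_{m-1}$ would follow similarly, using that $p_{m-1}$ has smaller degree).

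Once the subspace inclusion is established, uniqueness of interpolation within the $2m$-dimensional subspace at the $2m$ nodes yields $\ell_t d_m = q_m$ and $\ell_t n_{m-1} = p_{m-1}$, and the common factor $\ell_t$ cancels to give $r_m^{\gamma, t} = p_{m-1}/q_m = r_m$ on $[0,1)$, as required.
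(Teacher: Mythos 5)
Your construction is the same one the paper uses: the weights $\gamma_j = q_m(t_j)/\mu_j$ (the paper's $q_m(t_j)w_j$ up to an irrelevant global sign), the multiplication by $\ell_t$ to reduce everything to genuine trigonometric polynomials, and the verification of the side condition via the identity $\sum_j p_{m-1}(t_j)/\mu_j = 0$, which is indeed valid because every trigonometric polynomial of degree at most $m-1$ lies in the span of the $\phi_j$. However, you explicitly leave unproved the step you call delicate, namely that $q_m$ itself lies in the $2m$-dimensional span of $\phi_j(x) = \ell_t(x)\cot(\pi(x-t_j))$, so the proposal as submitted is incomplete; that missing step is the entire content of the lemma.

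Moreover, no residue or contour argument will close this gap, because the claim is false for generic nodes. Expanding each $\phi_j$ in exponentials shows $c_{-m}(\phi_j) = -e^{2\pi i\sum_k t_k}\,c_m(\phi_j)$ with $c_m(\phi_j)\neq 0$, and the $\phi_j$ are linearly independent (as $\phi_j(t_\ell) = \delta_{j\ell}\mu_j$), so their span is exactly the hyperplane $\{g : c_{-m}(g) = -e^{2\pi i\sum_k t_k}c_m(g)\}$ inside the $(2m+1)$-dimensional space of degree-$m$ trigonometric polynomials. A fixed denominator $q_m = \sum_{|j|\le m}b_j e^{2\pi i jx}$ with $b_{\pm m}\neq 0$ belongs to this hyperplane only when $b_{-m} = -e^{2\pi i\sum_k t_k}b_m$, a condition that pins down $\sum_k t_k$ modulo $1$ and therefore fails for almost every admissible $t$. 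Concretely, take $m=1$, $r_1(x) = 1/(2-\cos 2\pi x)$, and $t=\{0,1/10\}$: the side condition forces the numerator $\ell_t n_0$ to be constant, while $\ell_t d_1 = \tfrac{\gamma_1+\gamma_2}{2}\sin(2\pi x - \pi/10) + \text{const}$, which is never proportional to $2-\cos 2\pi x$ because of its nonzero $\sin 2\pi x$ component; hence no weights recover $r_1$. The same unjustified step appears in the paper's own proof, which simply asserts that $q_m$ ``can be written in barycentric form with respect to the interpolating points in $t$'' --- true for degree at most $m-1$, but not for degree exactly $m$ with arbitrary nodes. So your instinct about where the danger lies was exactly right, but the lemma needs either a hypothesis tying $\sum_k t_k$ to the phase of $b_{-m}/b_m$, or a richer barycentric basis spanning all degree-$m$ trigonometric polynomials, before the argument can be completed.
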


\begin{proof} 
Consider the denominator $q_m$ in $r_m = p_{m-1}/q_m$, and assume that $q_m$ has no shared zeros with $p_{m-1}$. Since $q_m$ is a trigonometric polynomial, we can write it in barycentric form with respect to the interpolating points in $t$: 
\begin{equation} 
q_m(x)  = \ell_t(x)  \sum_{j = 1}^{2m} w_j q_m(t_j) \cot(\pi (x-t_j)), \qquad \ell_t(x) = \prod_{j = 1}^{2m} \sin(\pi(x-t_j)),
\end{equation}
where \hbox{$w_j = 1/ \prod_{k = 1, j \neq k }^{2m} \sin(\pi(t_k-t_j))$} are the polynomial barycentric weights~\cite{berrut2004barycentric} associated with $t$.  By setting $\gamma_j = q_m(t_j) w_j$ and $f_j = r_m(t_j)$, we have via~\cref{eq:BarycentricRat} that there is $r_m^{\gamma, t}(x) = n(x) \ell_t(x)/ q_m(x)$ for some function $n$, where for each $j$, $r_m^{\gamma, t}(t_j) = r_m(t_j)$.   We must now  show that $n(x) \ell_t(x) = p_{m-1}(x)$. 

The  barycentric trigonometric polynomial interpolant to $p_{m-1}$ on $t$ exists and is given by $p_{m-1}(x) = \ell_t(x) \sum_{j = 1}^{2m} w_j p_{m-1}(t_j) \cot( \pi (x - t_j))$. Expanding this in exponential form, we have that $ p_{m-1}(x) = c_{m} e^{ 2 \pi i mx} + \ldots + c_{-m}  e^{-2 \pi i mx}$, where
$$  c_{m} = \frac{1}{4i} {\rm exp}\left( { -\pi i \sum_{j = 1}^{2m} t_j }\right) \sum_{j = 1}^{2m} w_j p_{m-1}(t_j) , \quad c_{-m} = - {\rm exp}\left( { 2 \pi i \sum_{j = 1}^{2m} t_j }\right) c_m.$$ 
Since $p_{m-1}$ is of degree $m\!-\!1$,  $c_m = c_{-m} = 0$, so \smash{$\sum_{j = 1}^{2m} w_j p_{m-1}(t_j) = 0$}. This implies that $\sum_{j = 1}^{2m} \gamma_j r_m(t_j )= 0$, as  $r_m(t_j) = p_{m-1}(t_j)/q_{m}(t_j)$.  Now it is clear that $n(x) \ell_t(x)$ is  also a trigonometric polynomial of degree $m\!-\!1$. Since $n \ell_t$ interpolates  $p_{m-1}$ at $2m$  points, they must agree everywhere. \end{proof}

The computation of $\gamma$ as in \cref{lemma:Interp_recovery} via polynomial barycentric weights and  the evaluation of $q_m$ is numerically unstable except in very special cases~\cite{trefethen2013approximation}.  The stable computation of $\gamma$ and subsequently, the  error $\| r_m - r_m^{\gamma, t}\|_\infty$, depends strongly on the choice of $t$. Some of the more obvious methods for selecting the nodes perform poorly and lead to instabilities in the form of spurious poles. The following discussion is somewhat technical, but it introduces an effective heuristic for choosing a ``good" set of barycentric nodes and then stably constructing $r_m^{\gamma, t} \approx r_m$. 

\subsubsection{A modified pronyAAA for rational recovery} \label{sec:CPQR}

A simple strategy for choosing nodes is to evaluate $r_m$  on a fine enough grid and then apply $m$ steps of pronyAAA to construct the interpolant $r_m^{\gamma, t}$. This method does not usually exactly recover $r_m$ (see the discussion of exact recovery in \cref{sec:FT}), and it can be the case that the error $\|r_m^{\gamma, t} -r_m\|_\infty$ is unacceptably large. A few additional steps of pronyAAA may drive the error down, though this results in a trigonometric rational interpolant with more poles than $r_m$.  However, a more pernicious problem with this approach is that demanding accuracy close to machine precision from AAA-based methods can result in spurious poles on the interval of approximation that cannot be eliminated without adversely impacting accuracy~\cite{nakatsukasa2018aaa}. 

 To avoid introducing spurious poles, we use the poles of  $r_m$, which are known explicitly from $R_m$ via~\cref{Lemma:Exponential_sum_Fourier_coeffs}. There is no hope of exactly preserving the poles. However, if $m$ is fixed and $r_m^{\gamma, t}$ is constructed such that it approximately preserves the given poles, then it cannot also admit arbitrary spurious poles. 
This motivates a three-step procedure for constructing $r_m^{\gamma, t}$ that mixes a pole-preservation strategy involving a type $(m\!+\!K-\!1, m \!+\! K)$ trigonometric rational with a data-driven strategy:
\begin{itemize}
\item[(1)] A candidate set $\tilde{t}$ of $2m + 2K$ barycentric nodes is chosen, where $K \geq 0$ is an oversampling parameter.  Subsets of $\tilde{t}$ admit type $(m\!-\!1, m)$ barycentric trigonometric interpolants with poles close to those of $r_m$. 
\item[(2)] The interpolant \smash{$r_{m+2K}^{\tilde{\gamma}, \tilde{t} }$} is constructed via a pole-preserving least-squares fit to samples of $r_m$ (discussed below), so that it has $2m$ poles close those of $r_m$. 
\item[(3)] The pronyAAA cleanup procedure (see \cref{cleanup}) is applied to remove the $2K$ poles of \smash{$r_{m+2K}^{\tilde{\gamma}, \tilde{t}}$} with the smallest residues. This selects $t$, a set of $2m$ barycentric nodes, from $\tilde{t}$. The barycentric weights $\{\gamma_1, \ldots, \gamma_{2m}\}$ are then computed via~\cref{eq:AAAopt}. Note that the poles of $r_m^{\gamma, t}$ must be recomputed. 
\end{itemize}
A version of this method without oversampling (i.e., with $K = 0$, $\tilde{t} = t$, and $\tilde{\gamma} = \gamma$) is useful for motivating how the barycentric nodes in Step (1) are selected. In such a setting, Step (2) simplifies substantially and Step (3) is not needed. However, it is more stable to choose $K > 0$, and in practice, we usually take $K = 1$.  We first describe the $K \!=\! 0$ case, and then use it to explain the method for $K > 0$. 

 \textbf{Case 1: $K = 0$.}  Suppose that $T$, the discretization of $[0, 1)$ from which  $t$ is chosen, consists of points $ x_0 < x_1< \ldots < x_{2N}$. Let  $P = \{\eta_1, \ldots, \eta_{2m}\}$ be the poles of $r_m$. Ideally,  $r_m^{\gamma, t}$ can be constructed so that its  poles  are given by $P$.  Noting that the poles of $r_m^{\gamma, t}$ are the zeros of the denominator polynomial \smash{$d_m(x) = \sum_{j = 1}^{2m}\cot(\pi (x -  t_j))$} in \cref{eq:BarycentricRat}, we introduce the matrix \hbox{$D_T \in \mathbb{C}^{(2m\!+\!1) \times (2N\!+\!1)}$}:
\begin{equation} 
\label{eq:Polematrix}
D_T = \left[ \begin{array}{cccc}
 \ell_{1,0}& \cdots & \cdots & \ell_{1,2N} \\
    \vdots &  &  & \vdots \\
    \ell_{2m, 0} & \cdots &  \cdots & \ell_{2m,2N} \\
    \hline
    r_m(x_0) & \cdots &  \cdots & r_m(x_{2N})
\end{array} \right],   \quad \ell_{j,k} = \cot( \pi( \eta_j -  x_k) ).
\end{equation} 
Using $D_T$, we relate the selection of barycentric nodes to a column subset selection problem. Indexing from $0$, denote by $(D_T)_{k}$ the $k$th column of $D_T$. The $k$th column is associated with the point $x_k$ in $T$. The set of nodes \hbox{$t  = \{x_{k_1}, x_{k_2}, \ldots x_{k_{2m }}\}$} then corresponds to a set of columns that forms the submatrix \hbox{$D_t = \left[ (D_T)_{k_1}, \ldots, (D_T)_{k_{2m}} \right ]$}.  From \cref{lemma:Interp_recovery}, there is $\gamma = (\gamma_1, \ldots, \gamma_{2m})^T$ such that $r_m^{\gamma, t} = r_m$, and $\gamma$ is in the null space of $D_t$. The first $2m$ entries of $D_t\gamma$ are evaluations of $d_m$ at its zeros. The last entry of $D_t\gamma$ is also zero, since numerator of $r_m^{\gamma, t}$ is of degree $m\!-\!1$ (see \cref{sec:bary}).   If  $\gamma$ can be computed from $D_t$ accurately, then clearly $t$ is an excellent set of interpolating points. However, the accuracy of this computation depends on properties of $D_t$. In particular, there are stable ways to compute $\gamma$  if $2m\!-\!1$ of the columns of $D_t$ form a well-conditioned matrix~\cite{gu1996efficient}. 

This suggests that we choose the points $t$ by choosing a subset of columns from $D_T$ that are close to orthogonal. Several kinds of rank-revealing algorithms can be applied to $D_T$ to approximately solve this problem, including the column-pivoted QR (CPQR) algorithm. This  constructs the factorization $D_TP  = Q_T R$, where $P$ is a permutation matrix, and the leading $\ell \leq {\rm rank}(D_T)$ columns of $D_TP$ have been greedily selected to minimize their linear dependence on one another~\cite[Sec.~6.4]{golub2012matrix}. As a consequence of \cref{lemma:Interp_recovery}, any submatrix of $D_T$ consisting of $2m$ or more columns is rank-deficient, so ${\rm rank}(D_T) \leq 2m\!-\!1$.\footnote{If we assume that $r_m$ has a denominator of exactly degree $m$, then ${ \rm rank}(D_T) = 2m\!-\!1$.} We choose $2m\!-\!1$ points in $t$ by performing CPQR on $D_T$.  In principle, the final point in the set $t$ should be chosen so that the accuracy of the computed right singular vector in the nullspace of $D_t$ in~\cref{eq:Polematrix} is maximized. Instead, we choose the point associated with the column in the trailing $(|T|\!-\!2m\!+\!1)$  columns of $D_TP$  that has the smallest 2-norm. Though they are selected quite differently, these CPQR-selected barycentric nodes  concentrate around singularities, just like the nodes selected by pronyAAA (see \cref{sec:CPQRvsBary} and \cref{fig:CPQRclusters}).  
 
If one can compute the vectors in the null space of $D_t$ accurately, then $\gamma$ can be taken as the last right singular vector of $D_t$. However, this is rarely the case. The accurate recovery of $\gamma$ from $D_t$ can be problematic even with the best possible choice $t \subset [0, 1)$. For this reason, we require a strategy that additionally incorporates a fit to samples. The simplest idea is to use the $2m$ points as selected above and then find the barycentric weights via \cref{eq:AAAopt}. However, this strategy does not seem to eliminate spurious poles or reduce error as effectively as the procedure we describe below.
  
 \textbf{Case 2: $K \neq 0$.} 
Typically, it is sufficient to take $K\! =\! 1$, though one can also choose a larger $K$. Construct an oversized candidate set $\tilde{t} = \{ x_{k_1}, \ldots, x_{k_{2m+2K}} \}$ using the CPQR-based method from Case 1. Then $D_{\tilde{t}}$ is of size \hbox{$(2m+1) \times (2m+2K)$} and has a numerically detectable null space.  We compute the barycentric weights of the interpolant $r_{2m+2K}^{\tilde{t}, \tilde{\gamma}}$ by requiring that $\tilde{\gamma} =  \tilde{Q}\eta$, where the columns of $\tilde{Q}$ are orthogonal and approximately span the null space of \smash{$D_{\tilde{t}}$}. We select $\eta$  to minimize \smash{$ \|C\tilde{Q}\eta\|_2$}, with $C$ constructed as in  \cref{eq:linearizedOpt}.  Approximate poles and residues of \smash{$r_{2m+2K}^{\tilde{t}, \tilde{\gamma}}$} can then be computed in $\mathcal{O}(m^3)$ operations (see \cref{sec:Roots}). It is almost always the case in practice that $2K$  poles of \smash{$r_{2m+2K}^{\tilde{t}, \tilde{\gamma}}$} are negligible in that they have residues with tiny magnitudes. With this in mind, we sort the poles by the magnitude of their residues. As in the pronyAAA cleanup routine, for each of the $2K$ poles with the smallest residues, we eliminate the point in $\tilde{t}$ that is nearest to the pole. The remaining points in $\tilde{t}$ are taken as $t$, and the set of barycentric weights are found as in a standard step of pronyAAA, i.e., as the minimizer of~\cref{eq:AAAopt}. 
 
 This strategy first selects a set of interpolating points for which an interpolant with good properties (e.g.,  poles off $[0, 1)$) is known to exist, and then fits the interpolant to samples of $r_m$. 
We remark that this is a heuristic. There is no guarantee that spurious poles are avoided or the original poles of $r_m$ are preserved. It remains unclear why the solution in Step (3) often inherits the good pole properties associated with the initial solution in Step (2), and under what circumstances this inheritance can be assured. Nonetheless, we find that the method works extremely well in many cases where simply applying pronyAAA fails.  
  
\paragraph{Implementational details}
In practice, we start with $K = 0$.  When $\gamma$ can be recovered with high accuracy directly from $D_t$ , we recover it and end the procedure. This can be checked by computing the singular values of $D_t$ or by using estimates related to the CPQR routine~\cite{gu1996efficient}. 
When this isn't possible, we set $K = 1$ and enlarge our selection of candidate barycentric nodes, which requires no additional computation. Then, we move on to Steps (2) and (3).  If the method fails and spurious poles are detected, we first try enlarging $\tilde{t}$ by setting $K = 2$ and trying again. If this fails, it can often be remedied by resampling $r_m$ on a denser grid and starting over at Step (1). When resampling does not solve the issue, we instead construct a stable barycentric interpolant using pronyAAA by accepting a lower level of accuracy. 

\begin{figure}
\centering
\hspace{.2cm}
      \begin{minipage}{.45\textwidth} 
 \centering
  \begin{overpic}[width=\textwidth]{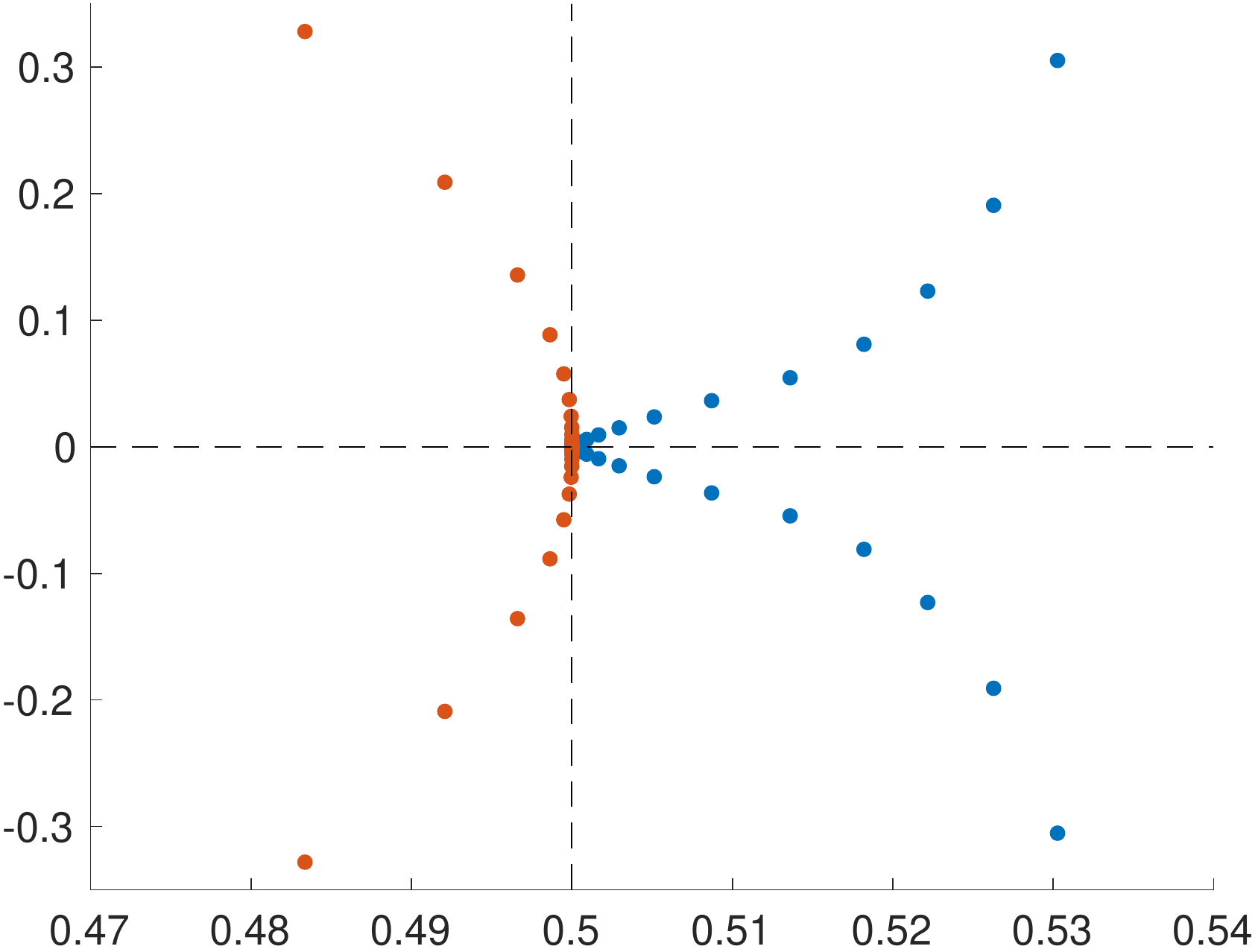}
     \put(95, 39){\small{$Re$}}
      \put(42, 75){\small{$Im$}}
  \end{overpic}
  \end{minipage}
    \hspace{.2cm}
    \begin{minipage}{.45\textwidth} 
 \centering
  \begin{overpic}[width=\textwidth]{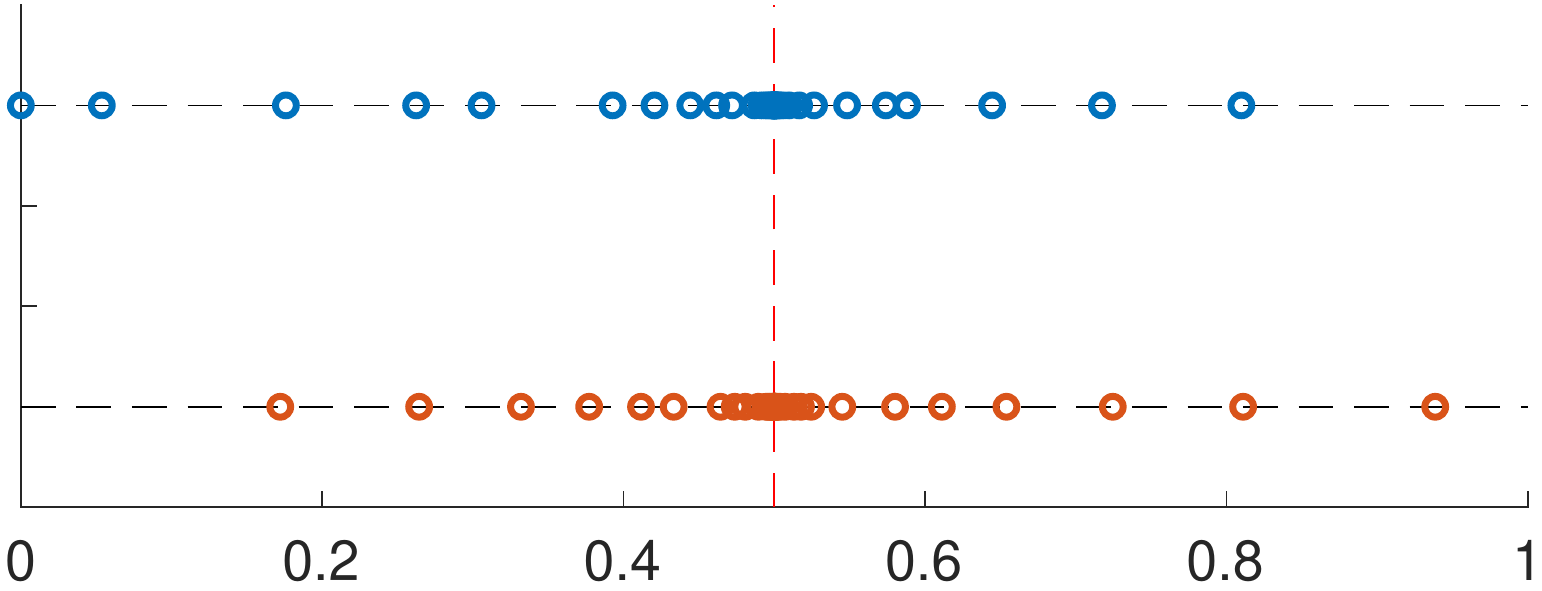}
    \put(50, -5){\rotatebox{0}{\small{$x$}}}
  \end{overpic}
  
  \vspace{.5cm}
    \begin{overpic}[width=\textwidth]{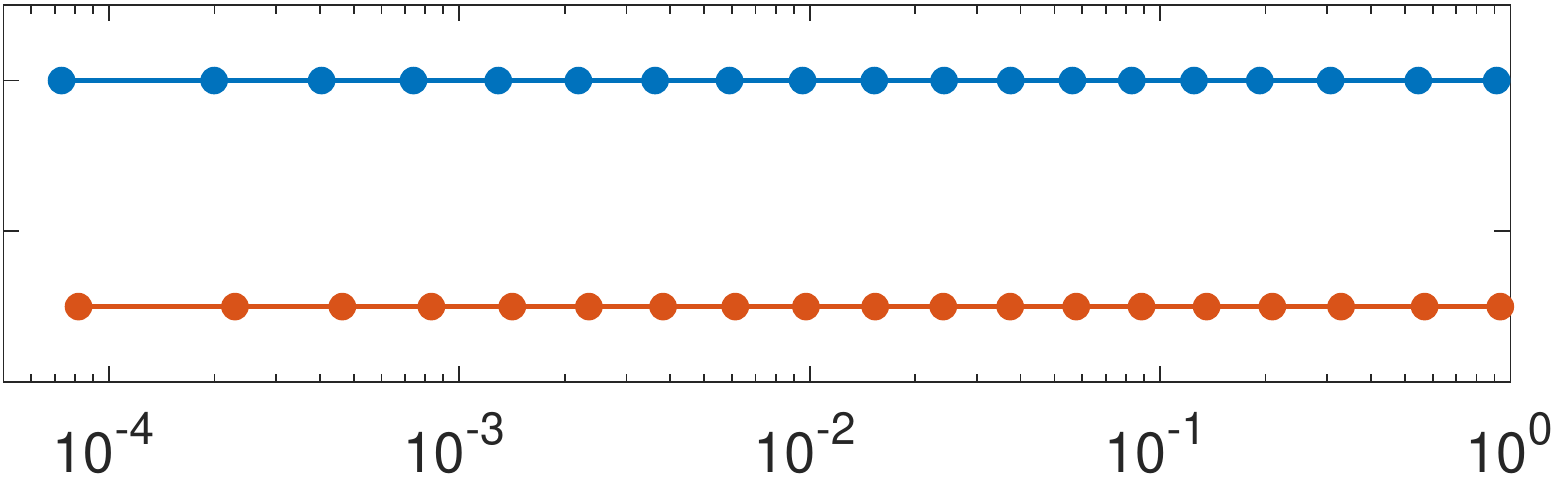}
    \put(35, -6){\rotatebox{0}{\small{$d_j = |\eta_j -.5|$}}}
  \end{overpic}
  \end{minipage}
  \vspace{.3cm}
  \caption{Left: poles of \smash{$r_{bary}$} (blue) and \smash{$r_{ift}$} (orange) are plotted in the complex plane. Here, \smash{$r_{bary}$} is a type $(18, 19)$ barycentric interpolant to $f$ constructed via pronyAAA, and \smash{$r_{ift} = \mathcal{F}^{-1}(R_{19}) \approx f$},  where \smash{$R_{19}$} is an exponential sum as in~\cref{eq:Exp_Sum}, and \smash{$r_{ift}$} is constructed by applying the inverse Fourier transform from \cref{sec:CPQR} to \smash{$R_{19}$}. The function $f$ is given by $f(x) = |\sin(\pi(x-1/2))| - \pi/2$, and has a singularity at $x = 1/2$. 
  Right upper: The locations of the barycentric nodes for \smash{$r_{bary}$} (blue) and \smash{$r_{ift}$} (orange). Right lower: The distances \smash{$d_j = |\eta_j - .5|$} from the singularity, where each \smash{$\eta_j$} is a pole with \smash{${\rm Im}(\eta_j) >0$}, are sorted by size and plotted on a logarithmic scale (shown in blue for \smash{$r_{bary}$}, orange for \smash{$r_{ift}$}). This shows that both the pronyAAA-constructed interpolant and the CPQR-based interpolant have poles that exhibit the tapered-type clustering associated with quasi-optimal rational approximation schemes~\cite{trefethen2021exponential}.}
  \label{fig:CPQRclusters}
  \end{figure}
  
\subsubsection{Example: Two types of barycentric interpolants} \label{sec:CPQRvsBary}
 In \cref{fig:CPQRclusters}, we compare the properties of two types of rational approximations to the function \hbox{$f(x) = |\sin(\pi(x-1/2))| - \pi/2$}. First, we apply pronyAAA to a set of $6000$ samples of $f$ taken on an equally-spaced grid $T$. This constructs \smash{$r_{bary}$}, a type $(18, 19)$ trigonometric rational, where away from the singularity, $|f(x) - r_{bary}(x)|\approx 10^{-8}$.  The locations of the barycentric nodes selected by pronyAAA are plotted (blue) in the upper right panel of \cref{fig:CPQRclusters}. In the left panel, a subset of the poles of $r_{bary}$ are plotted (blue) in the complex plane. Both the nodes and the poles  cluster up near the singularity \hbox{$x = 1/2$}. Shown in red in the same plots are the CPQR-selected barycentric nodes from \cref{sec:CPQR}, and the poles of the barycentric trigonometric rational \smash{$r_{ift} = \mathcal{F}^{-1}(R_m)$}, where $m = 19$. Here, \smash{$R_m$} is an exponential sum constructed via the RPM using  samples  of $f$ on $T$, and \smash{$r_{ift}$} is constructed using the procedure in \cref{sec:CPQR}.  The nodes and poles of \smash{$r_{ift}$} also cluster near $x = 1/2$, but in spatial patterns that are quite different from those of \smash{$r_{bary}$}. A closer investigation of the pole clustering patterns (see~\cref{fig:CPQRclusters}, lower right) reveals that in both cases, the sets of distances \smash{$d_1\leq d_2 \leq   \ldots \leq d_{19}$} from the singularity, where  \smash{$d_j = |\eta_j - 1/2|$} and each \smash{$\eta_j$} is a pole with \smash{${\rm Im}(\eta_j) > 0$}, have  the tapered-type spacing on a logarithmic scale that is associated with best (and near-best) convergence rates~\cite{trefethen2021exponential}.  In addition to revealing that these functions have properties associated with quasi-optimal approximation power,  the locations and clustering patterns of the nodes or poles can be used to identify singularities, extract features, or classify signals.

\section{Signal reconstruction in time and frequency space} \label{sec:Examples}
With the Fourier and inverse Fourier transforms available, we can combine the advantages of pronyAAA and the RPM to overcome various issues, such as undersampling or noise.  We illustrate this idea with two examples. Then in \cref{sec:REfit}, we describe a collection of algorithms for computing with trigonometric rational functions and exponential sums that exploits our ability to move stably between the representations.

\subsection{An undersampled function}

\begin{figure}
\centering
    \begin{minipage}{.45\textwidth} 
 \centering
  \begin{overpic}[width=.95\textwidth]{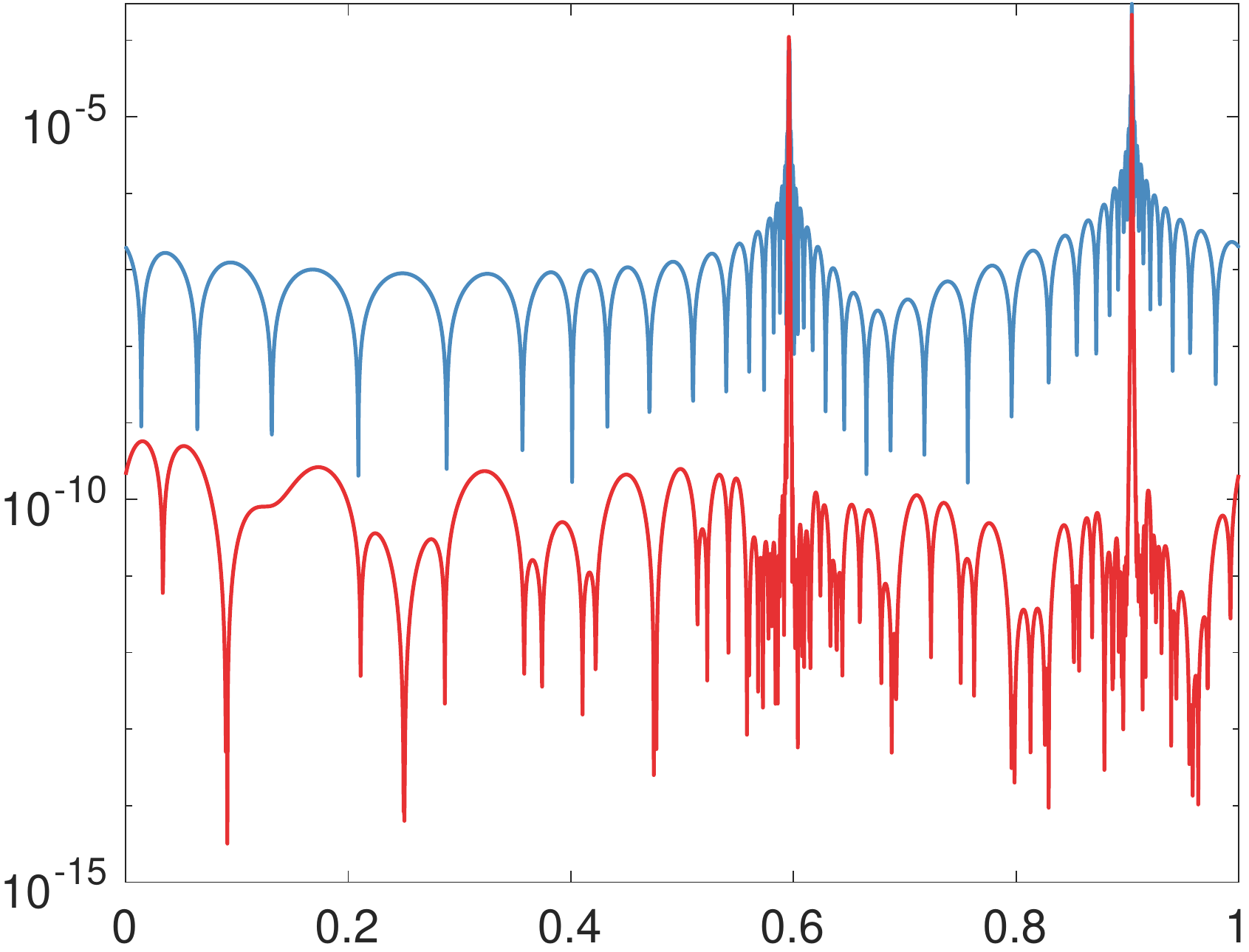}
    \put(30,80){Time domain}
      \put(-7, 35){\rotatebox{90}{error}}
    \put(13, 58){\rotatebox{0}{\tiny{ (Prony's method alone)}}}
    \put(19, 13){\rotatebox{0}{\tiny{ (pronyAAA + Fourier transform)}}}
  \end{overpic}
  \end{minipage}
      \begin{minipage}{.45\textwidth} 
 \centering
  \begin{overpic}[width=.95\textwidth]{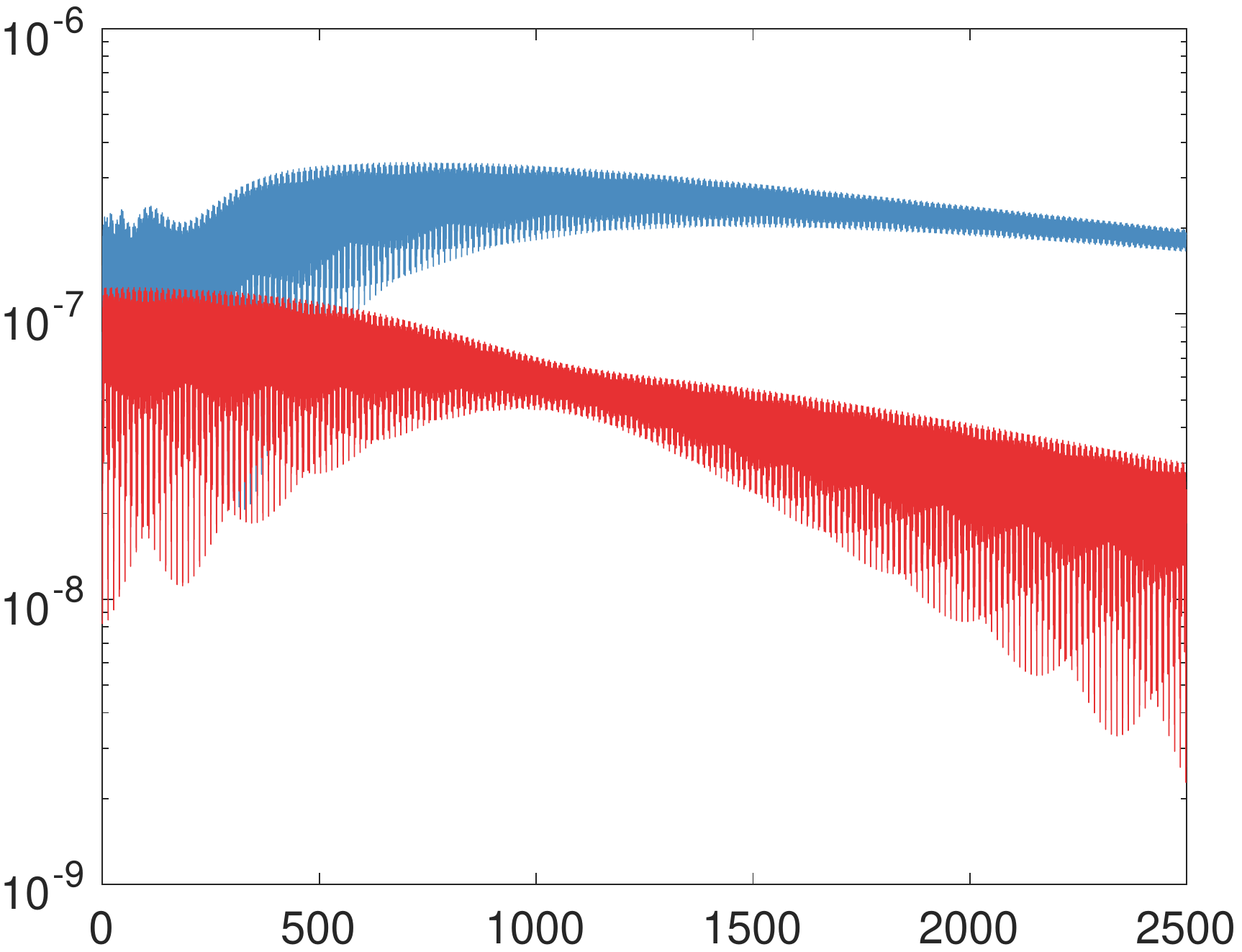}
    \put(25,80){Frequency domain}
    \put(45, 65){\rotatebox{-4}{\tiny{ (Prony's method alone)}}}
    \put(31, 53){\rotatebox{-10}{\tiny{ (pronyAAA + Fourier transform)}}}
  \end{overpic}
  \end{minipage}
  \caption{Left: The absolute error in approximating $f(x) = |\sin(2 \pi x)/4 + {\rm exp}( \sin( 2 \pi x))|/4 + c$  with two different rational approximants, $\mathcal{F}(R_b)$ (blue) and $\mathcal{F}(R_{r})$ (red), is plotted on a logarithmic scale at $3000$ equally-spaced points: $R_b$ is an exponential sum of length $14$ that was adaptively constructed via the RPM (see~\cref{alg:apm}) from a sample consisting of only $1401$ equally-spaced points. The tolerance parameter is set to $\epsilon = 10^{-10}$, but the coarseness of the sample limits the achievable accuracy of the representation. $R_r$ (red) is constructed by first applying pronyAAA in signal space to construct a barycentric interpolant $r_r^{\gamma, t}$,  and then using the Fourier transform function to compute $R_r = \mathcal{F}(r_r^{\gamma, t})$.  Right: The absolute errors in Fourier space between accurately computed Fourier coefficients of $f$ and the exponential sums $R_b$ (blue) and $R_r$ (red) are plotted on a logarithmic scale against the Fourier modes $0, 1, \ldots, 2500$. 
  }
  \label{fig:Slow_decay}
  \end{figure}
  
In this example, we consider a function $f(x) = |\sin(2 \pi x)/4 + {\rm exp}( \sin( 2 \pi x))|/4 + c$, which has Fourier coefficients that decay asymptotically like $\mathcal{O}(|k|^{-2})$, where $k$ denotes the $k$th Fourier mode. Here, $c$ is a normalization parameter ensuring that the mean value of $f$ over $[0, 1)$ is zero. We suppose that  $f$ is sampled at $1401$ equally spaced points, and that an exponential sum representing $\mathcal{F}(f)$ is desirable for downstream tasks.  The direct application of Prony's method performs poorly because $f$ is undersampled. We denote the constructed exponential sum as $R_b$. The error in the computation of the Fourier coefficients via the FFT is on the order of $10^{-6}$, so we cannot expect accuracy much better than that. An alternative approach is to apply pronyAAA to construct the barycentric interpolant $r_r^{\gamma, t}$, and then compute $R_r = \mathcal{F}(r_r^{\gamma, t})$ as in \cref{sec:FT}.

In~\cref{fig:Slow_decay} (left), we use the pole-residue format to directly evaluate the values of the rationals associated with the two types of constructed exponential sums. There is a tiny band around the two singularities where the errors incurred by the two methods are approximately the same. Elsewhere,  the accuracy achieved by first applying pronyAAA is nearly double that attained by Prony's method alone. The error in recovering the Fourier coefficients of $f$ is diffuse and more accurate, especially in the extrapolation of the tail (see~\cref{fig:Slow_decay}, right).  The exponential sum $R_r$, with only $13$ terms, represents $f$ with highly localized error behavior, and it is in a form efficient for storage, convolution, and other tasks (see \cref{sec:REfit}).

\subsection{Reconstruction of an ECG signal} 
\label{sec:ECG}

Rational approximation methods are effective in many biomedical monitoring tasks, including the processing of electrocardiogram (ECG) signals~\cite{fridli2011rational, gilian2014ecg}.  In~\cite{fridli2011rational}, rational functions constructed in the orthogonal rational Malmquist–Takenaka basis are used to reconstruct ECG signals and then classify them. The rationals perform with better overall compression properties and have several other advantages when compared to wavelets, splines, and other families of functions~\cite{kovacs2019generalized}.  We do not expect to outperform such a highly specialized scheme with our approach. However, this example illustrates that our more general-purpose method effectively constructs a denoised representation of the signal.  

In this example, we apply the RPM and fit a rational function directly to noisy ECG data taken from the PhysioNet MIT BIH arrhythmia database~\cite{moody2000physionet}.  As in~\cite{fridli2011rational}, the location of its poles can be used for classification and feature recognition tasks. Using the inverse Fourier transform function described in  \cref{sec:CPQR}, we can construct a barycentric trigonometric rational representation of the function, which is a convenient format for identifying local extrema (see \cref{sec:REfit}). This can all be done with three lines of code in REfit: 

\vspace{.5cm}

\indent \hspace{3cm}  \texttt{R = efun(data, 'tol', 1e-3); } \\
\indent \hspace{3cm} \texttt{r = ift(R); } \\
\indent \hspace{3cm} \texttt{extrema = [ min(r); max(r)]; } \\

\indent If one tries to use pronyAAA directly, the result is a trigonometric rational with $200$ poles, and the data set only contains $645$ samples. Of these poles, $62$ are spurious and lie on the interval of approximation. This happens because the pronyAAA algorithm does not distinguish between the signal and the noise, and it tries to induce a fit to noise by adding poles. A better approach is first to apply the RPM. Within the first two lines of the above code, several tasks are being executed: First, the exponential sum $R_m$ (here, $m = 35$) stored in \texttt{R} is constructed via the RPM. The RPM automatically filters out additive noise on the sample with magnitudes approximately at or below the tolerance level $\epsilon = 10^{-3}$.  Then, \texttt{R} is used to extrapolate high-frequency information that lies beyond the noise limitation (see~\cref{fig:ECG}, left). This provides an enriched sample for selecting interpolating points and constructing the barycentric interpolant \texttt{r}. The construction of \texttt{r} in this way can be viewed as a form of super-resolution~\cite{candes2014towards}.  If one tries to construct a barycentric interpolant without enriching the sample by extrapolation in frequency space, then spurious poles appear that one cannot eliminate without destroying the accuracy of the approximation. This is because the signal is not well-resolved in the time domain at the original sample rate.  Once \texttt{r} is available, one can then automatically and efficiently perform a variety of processing tasks, such as rootfinding and the detection of maxima and minima.    

\begin{figure}
 \centering
   \begin{minipage}{.43\textwidth} 
 \centering
  \begin{overpic}[width=.9\textwidth]{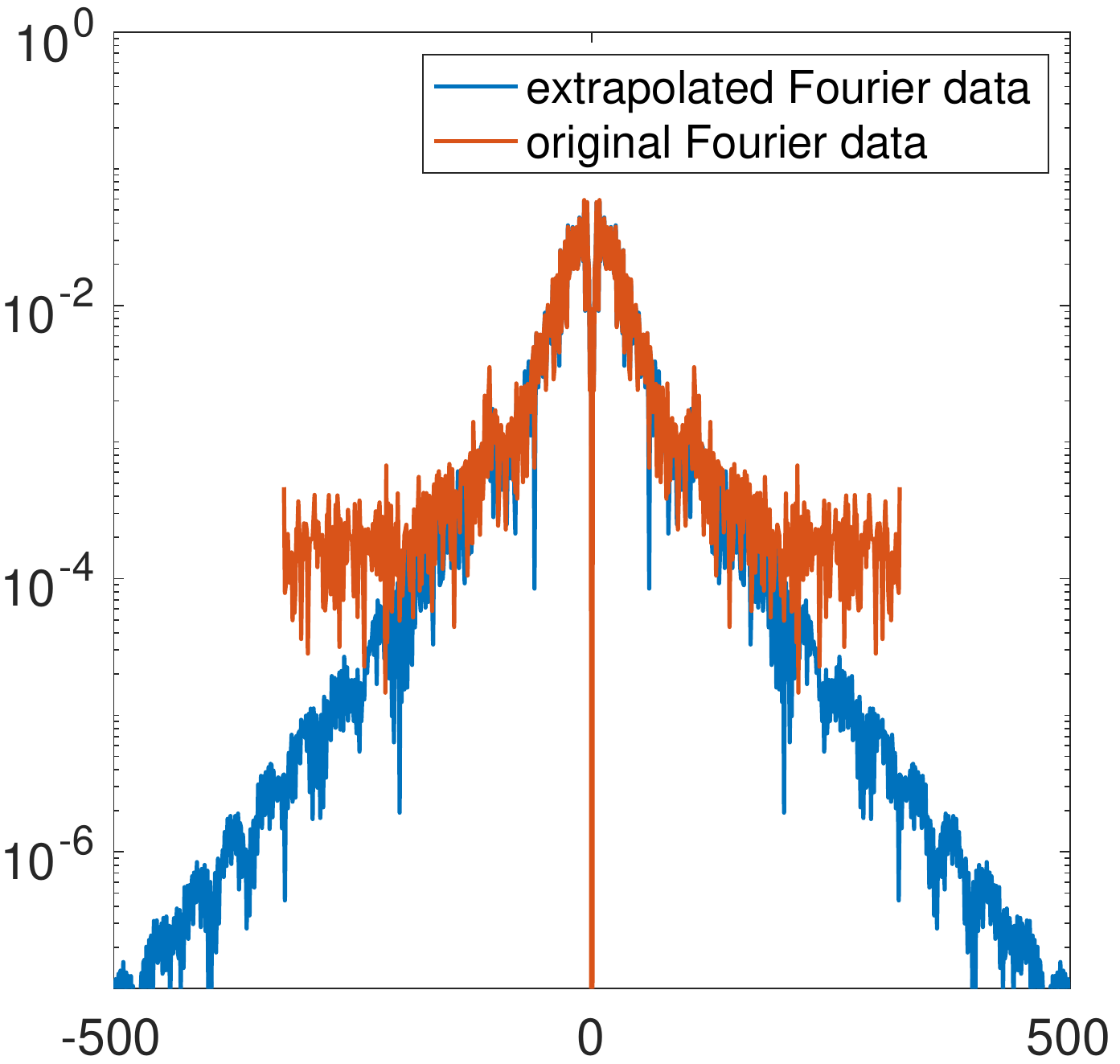}
  \put(20, 98){{Frequency domain}}
  \put(-7, 9){\rotatebox{90}{magnitude of coefficients}}
  \end{overpic}
  \end{minipage}  
  \hspace{.2cm} 
    \begin{minipage}{.45\textwidth} 
 \centering
  \begin{overpic}[width=\textwidth]{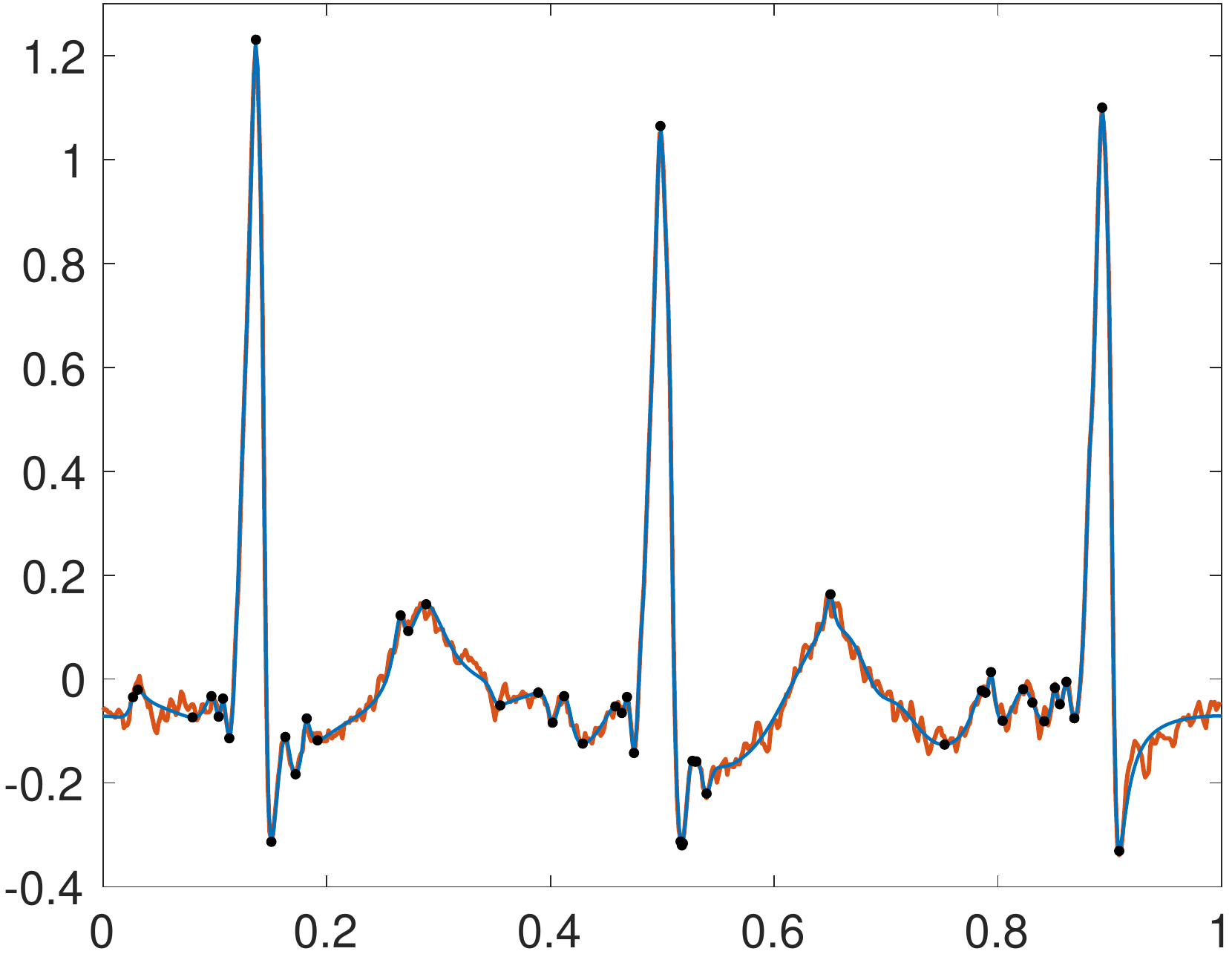}
    \put(33,80){Time domain}
      \put(-7, 9){\rotatebox{90}{amplitude of the signal}}
  \end{overpic}
  \end{minipage}
  \caption{ Left: The magnitude of the Fourier coefficients of the original signal (orange). The decay of the coefficients stagnates due to noise in the signal, and this pollutes the higher frequencies. Once an exponential sum representation $R_m$  is constructed, we can  extrapolate to higher frequencies by evaluating $R_m$,  and thereby super--resolve the signal (blue). Right: A barycentric rational approximant (blue) in the time domain is computed using the extrapolated Fourier data. It is a denoised version of the original ECG signal (orange). Local extrema are identified (black dots) using the differentiation and rootfinding algorithms from \cref{sec:REfit}. 
  }
  \label{fig:ECG}
  \end{figure}


\section{Algorithms for computing with rationals and exponential sums} \label{sec:REfit}
This section gives an overview of several of the algorithms used in our software~\cite{wilberREfit} to compute with trigonometric rationals and exponential sums. The Fourier and inverse Fourier transform functions are used to move between representations as needed. For operations on trigonometric rationals that return trigonometric rationals, we represent the resulting function using exponential sums or barycentric forms when possible. 

\subsection{Compression for suboptimal sums of exponentials} \label{sec:comp}  Exponential sums are closed under addition and multiplication, but a sum $R_{m}$ resulting from the naive application of these and other operations is often suboptimal in the sense that a shorter sum $\tilde{R}_{\tilde{m} }$ exists, where $|R_{m} (j) - \tilde{R}_{\tilde{m} }(j) | < \epsilon$ for $j \geq 0$.  A major advantage of the exponential sum format is that  $\tilde{R}_{\tilde{m} }$ can be constructed with a cost that usually depends on $m$, rather than the $\epsilon$-resolution parameter $N_\epsilon$ associated with $R_m$. 
Using AAK theory for finite rank Hankel operators, one can show (see~\cite[Thm.~3.2]{plonka2016application}) that there is a length \smash{$\tilde{m} \leq m$} approximation that satisfies
$$\|\mathcal{F}^{-1}(R_m) - \mathcal{F}^{-1}(\tilde{R}_{\tilde{m}})\|_{L_2} \leq  2\,  \sigma_{\tilde{m}}(\Gamma_R),$$ where $\Gamma_R$ is the infinite matrix with entries $(\Gamma_R)_{j + k} = R_m(j+k)$, $j, k \geq 0$.   In~\cite{plonka2016application},  an $\mathcal{O}(m^3)$ algorithm for recovering $\tilde{R}_{\tilde{m}}$ directly from the parameters of $R_{m}$ is developed; a closely-related approach using only properties of finite-dimensional Hankel matrices is described in~\cite{beylkin2005approximation}.  This method is successfully employed in~\cite{haut2012fast} within a scheme that uses rational approximations to solve Burger's equation. However, the implementation requires the judicious use of high-precision arithmetic, which we wish to avoid. 

 Instead, we note that when a length $\tilde{m} < m$ recurrence is approximately satisfied by the sequence $\{ R_m(0), R_m(1), \ldots\}$, this fact is often captured well with a modified Prony's method that involves only a small sample of $2M +1$ observations of $R_m$, where $M >m$. Specifically, we construct a small \hbox{$(M\! +\!1) \times (m \!+ \!1)$} rectangular Hankel matrix $H$ with entries $H_{j k} = R_m(j + k)$.  Then, we apply the RPM from~\cref{alg:apm} on $H$ to construct $\tilde{R}_{\tilde{m}}$. We check the error $|R_m (j) - \tilde{R}_{\tilde{m}}(j)|$ on a random sample of integers $0 \leq j \leq N$, where $N$ is the $\epsilon$-resolution parameter used in the original construction of $R_m$. When the error is too large, we increase $M$ and try again. The cost to compute $\tilde{R}_{\tilde{m} }$ is $\mathcal{O}(M m^2)$. In a worst-case scenario, $M$ can grow as large as $N$. We observe experimentally that this approach is often very effective, but more work is needed to understand the conditions under which it is guaranteed that $M \ll N$.
 
 \subsection{Sums of trigonometric rationals} If $S_\ell$ and $G_n$ are exponential sums, then  $R_m = S_\ell + G_n$ can be constructed straightforwardly. However, $R_m$ may be of suboptimal length.  We apply the compression algorithm with $\epsilon \approx \epsilon_{mach}$ to $R_m$, where $m = \ell + n$,  to find $\tilde{R}_{\tilde{m}}$. This ``compression--plus" method is especially useful for tasks that involve repeated summations and require many recompressions. The compression--plus algorithm is applied  in REfit when the `\texttt{+}' operator is used between efun objects. For summing trigonometric rationals $s_\ell$ and $g_n$ represented as rfuns, we simply evaluate the sum and then apply pronyAAA to find $r_m = s_\ell + g_n$.\footnote{If this proves difficult due to spurious poles, we use the Fourier and inverse Fourier transforms to convert to efuns, perform the addition, and then convert back to an rfun.} The rfun and efun objects can also be combined in various ways. The syntax  \hbox{\texttt{r = s + g}} adds two rfuns and returns an rfun by default. The expression \hbox{\texttt{[r, R] = s + g} }automatically retrieves the efun \hbox{\texttt{R = ft(r)}} in addition to \texttt{r}. When an rfun and efun are summed together, both rfun and efun outputs are returned. 

\subsection{Convolutions of trigonometric rationals}
\begin{figure}
\centering
    \begin{minipage}{.45\textwidth} 
 \centering
  \begin{overpic}[width=\textwidth]{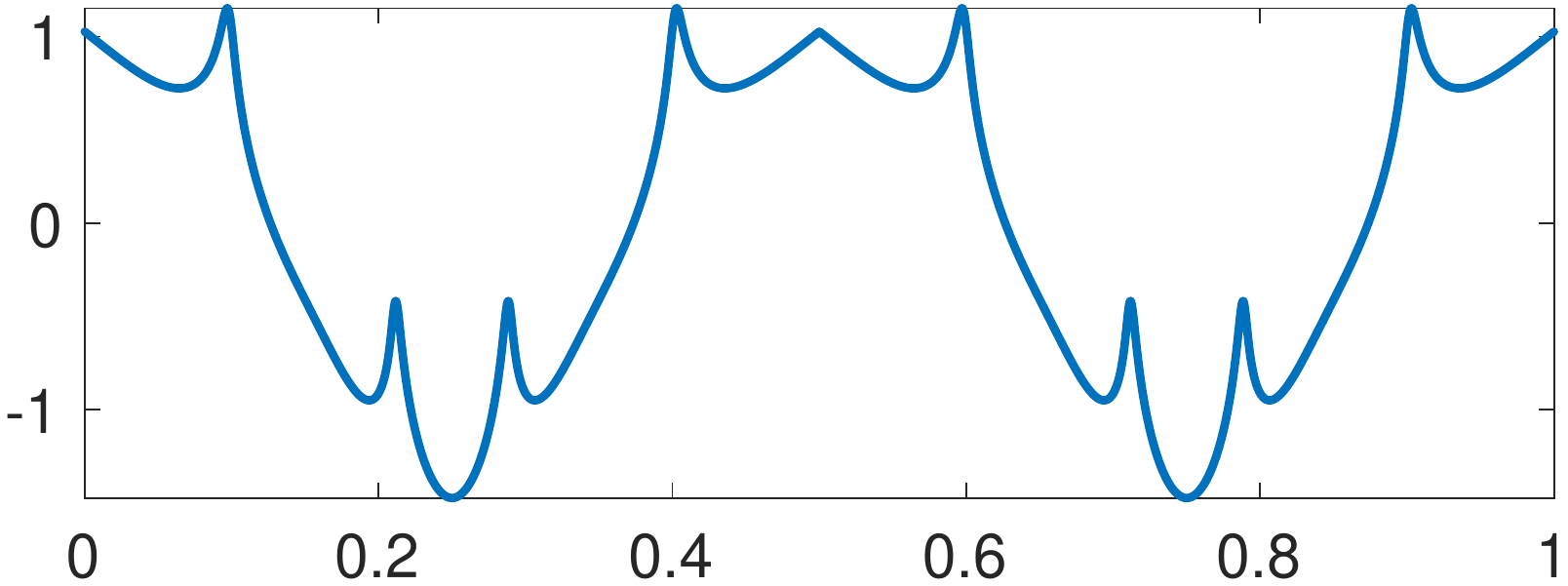}
    \put(50, -5){\rotatebox{0}{\small{$x$}}}
  \end{overpic}
  
  \vspace{.5cm}
    \begin{overpic}[width=\textwidth]{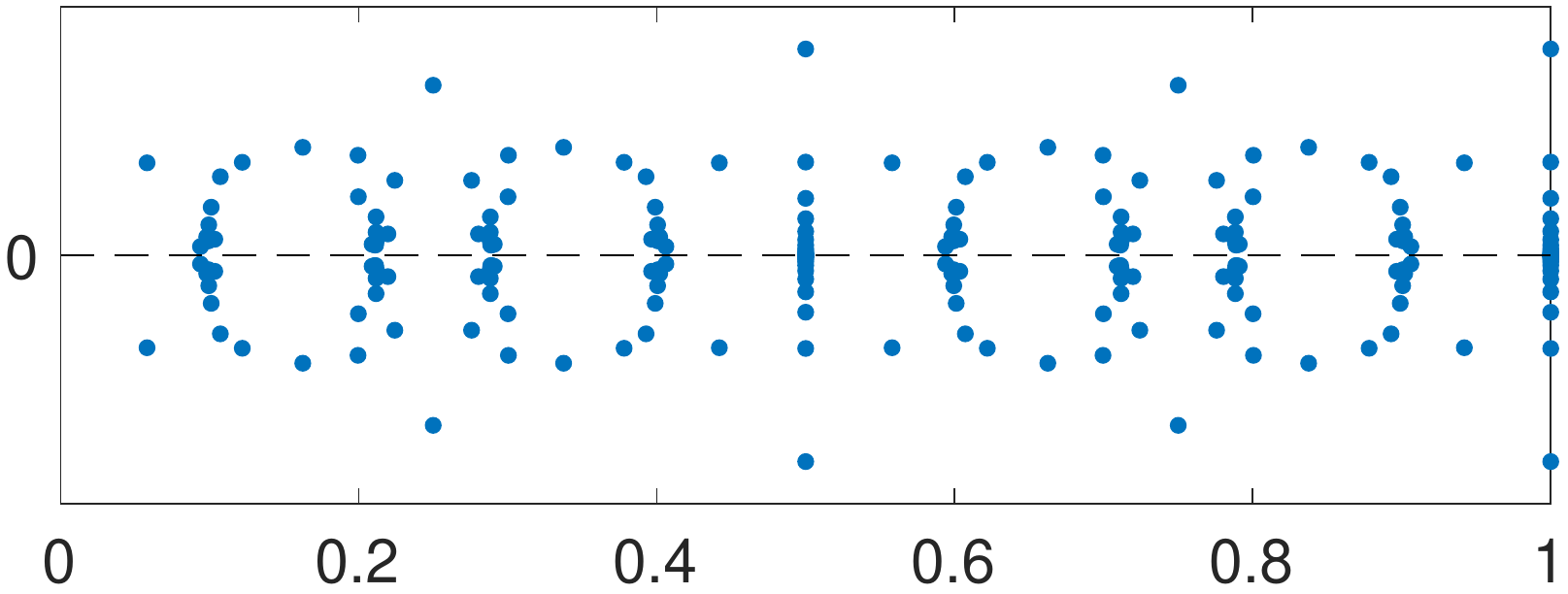}
    \put(50, -5){\rotatebox{0}{\small{$Re$}}}
    \put(-5, 15){\rotatebox{90}{\small{$Im$}}}
  \end{overpic}
  \end{minipage}
    \hspace{.2cm}
    \begin{minipage}{.45\textwidth} 
 \centering
  \begin{overpic}[width=\textwidth]{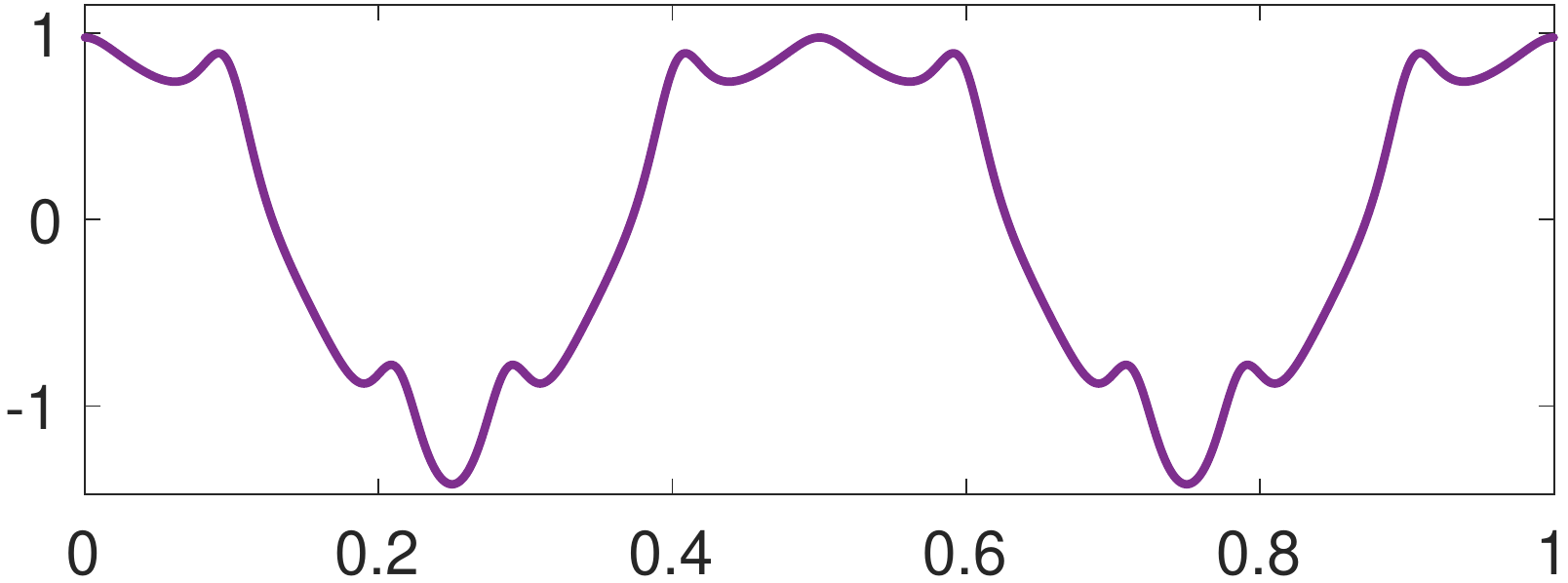}
    \put(50, -5){\rotatebox{0}{\small{$x$}}}
  \end{overpic}
  
  \vspace{.5cm}
    \begin{overpic}[width=\textwidth]{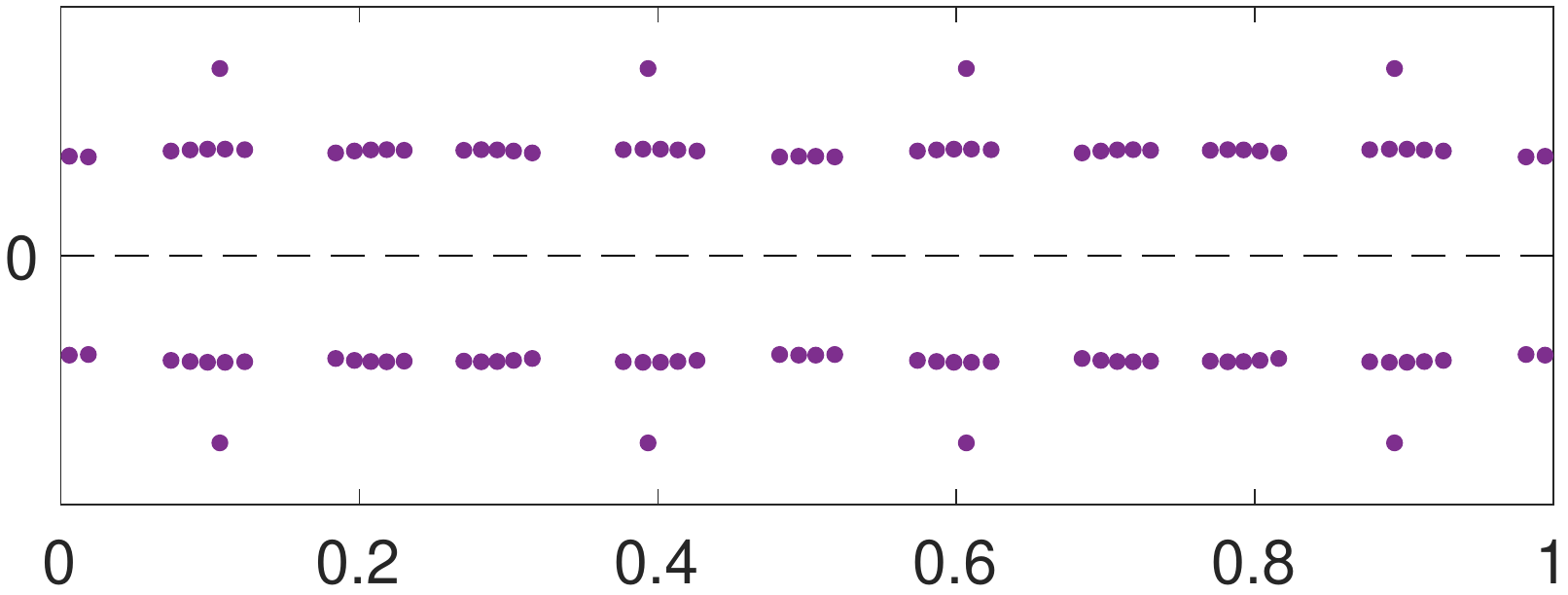}
     \put(50, -5){\rotatebox{0}{\small{$Re$}}}
      \put(-5, 15){\rotatebox{90}{\small{$Im$}}}
  \end{overpic}
  \end{minipage}
  \vspace{.3cm}
  \caption{Left (top): an exponential sum $S_\ell$ representing a function $f$ with singularities (inspired by Chebfun's ``wild" function~\cite{Chebfun}) is constructed as an efun, and $s_\ell = \mathcal{F}^{-1}(S_\ell) \approx f$ is plotted in the time domain. Here, $\ell = 116$ and $\|s_\ell-f\|_\infty \approx 10^{-9}\|f\|_\infty$.  Left (bottom): Locations of poles of $s_\ell$ occurring in a small strip of width $.2$ around the real line are plotted in the complex plane. Right (top): The convolution  $r_m = s_\ell \ast g_n $ is computed ($m = 53$), stored as an efun, and plotted in the time domain. Here, $g_n$ ($n = 13$) is a trigonometric rational approximation to a narrow Gaussian. Right (bottom): The poles of $r_m$ still cluster near the singularities of $f$, but less tightly, reflecting the fact that $f \ast g_n$ is a smoothed version of $f$.} 
  \label{fig:convolution}
  \end{figure}
  
The convolution of trigonometric rationals $s_\ell$ and $g_n$ (denoted $s_\ell \ast g_n$) can be constructed in Fourier space by finding the exponential sum $R_{m_0} = (\mathcal{F}s_\ell)( \mathcal{F} g_n) = S_\ell G_n$. The product can be computed directly in a closed form,  but this results in a large sum with $m_0 = \ell n$.  To find a shorter sum $\tilde{R}_{\tilde{m}}$, we first find an upper bound $m$ on $\tilde{m}$ by determining how many terms in $R_{m_0}$ have a negligibly small influence. We also check the decay in the tail of the first $m_0$ Fourier coefficients given by $S_\ell G_n$ to determine if even fewer samples are needed. Then, we apply the compression algorithm using  rectangular Hankel matrices of the form $H_{jk}  = S_\ell(j+k)G_n(j+k)$. For efuns, this operation is accessed  by typing \texttt{S.*G}. For rfuns, the command $\texttt{conv(s,g)}$ uses the Fourier and inverse Fourier transform functions to apply the above scheme. \cref{fig:convolution}  displays an example where $s_\ell$ ($\ell = 116$) is an approximation to a function $f$ with several singularities, and $g_n$  ($n = 13$) is an approximation to the normalized Gaussian \smash{$g(x) = \tfrac{1}{\sqrt{2 \pi} \sigma}  e^{-x^2/2\sigma^2}-1$}, with $\sigma = .01$. Our convolution algorithm constructs $r_m \approx f \ast g$, a smoothed version of $f$. The method automatically chooses $m = 53$, about half the degrees of freedom required for approximating $f$ with $s_\ell$. Little accuracy is lost in this process: $\|R_m - S_\ell G_n\|_\infty \approx 10^{-11}\|S_\ell G_n\|_\infty$. 


\subsection{Products of trigonometric rationals}  The product of two trigonometric rationals $r_\ell$ and $s_n$ in the time domain is equivalent to their convolution in Fourier space. If $R_\ell = \mathcal{F}(r_\ell)$ and $S_n = \mathcal{F}(s_n)$ are each sums of complex exponentials, then 
\begin{equation} 
\label{eq:disc_conv}
\mathcal{F}(r_\ell s_n)(k) = (R_\ell \ast S_n)(k) = \sum_{j = -\infty}^{\infty} R_\ell(k-j) S_n(j) \approx \sum_{j = -N_\epsilon}^{N_\epsilon}   R_\ell(k-j) S_n(j), 
\end{equation}
 where $N_\epsilon$ is the $\epsilon$-resolution parameter for $S_n$. The evaluation of~\cref{eq:disc_conv} at $M$ consecutive points requires a matrix-vector multiply with a Toeplitz matrix.  Since $r_\ell s_m$ is a trigonometric rational with at most $2\ell + 2n$ poles, we apply the compression algorithm to  find $G_{m} \approx \mathcal{F}(r_\ell s_n)(k)$, with $m = \ell + n$.  If \texttt{R} and \texttt{S} are  efuns, this command is accessed by typing \texttt{conv(R,S)}. If $r_\ell$ and $s_n$ are represented with rfuns \texttt{r} and \texttt{s}, respectively, then \texttt{r.*s} simply applies pronyAAA to the function $r_\ell(x)s_n(x)$to construct a new rfun representing $r_\ell s_n$.\footnote{If this proves difficult due to spurious poles, we use the Fourier and inverse Fourier transforms to convert to efuns, perform the convolution in Fourier space, and then convert back to an rfun.}  

\subsection{Differentiation} 
The $k$th derivative of $r_m$, denoted $ r_m^{(k)}$, is a trigonometric rational of type \hbox{$(k m\!-\!1, k m)$}. However, $ r_m^{(k)}$ is fundamentally of a different form than the trigonometric rationals constructed via pronyAAA and the RPM. It has $m$ conjugate pairs of poles, and each pole is of multiplicity $k$. It is possible to represent derivatives with trigonometric rationals with simple poles, but it isn't always sensible. By default, REfit returns a function handle for evaluating derivatives (or their Fourier transforms) whenever  \texttt{diff($\cdot$,k)} is applied to an rfun (or an efun). However, one can also use  \texttt{diff($\cdot$,k,`type')}, to specify that an efun or rfun should be returned.   

\paragraph{Differentiation in Fourier space} When $r_m$ is represented by the complex exponential sum $R_m$ in Fourier space,  the Fourier coefficients of \smash{$ r_m^{(k)}$} are given by  
\smash{$\mathcal{F}(r_m^{(k)}) (j) = (2 \pi i j)^k R_m(j).$}
The command \texttt{h=diff(R,k)} by default returns a handle for evaluating this function in Fourier space. If instead, for example,  one types \texttt{diff(R,k,`efun')},  the RPM is applied to construct a representation of \smash{$\mathcal{F}(r_m^{(k)}) $} as a sum of weighted complex exponentials (without polynomial coefficients). 

\paragraph{Differentiation in the time domain} Derivatives of barycentric trignometric rational interpolants  satisfy a recurrence relation and can be expressed in a simple closed form. To see this, consider the linearization of $r_m^{\gamma, t}= n_{m-1}/d_m$, which can be differentiated as $(r_m^{\gamma, t} d_m)^{\prime} = (n_{m-1})^\prime$.  Plugging in the definitions from~\cref{eq:BarycentricRat} results in the following formula, which holds everywhere on $[0, 1)$ except at the interpolating points: 
\begin{equation}
\label{eq:derivative}
(r_m^{\gamma, t})^\prime (x) = -\pi \dfrac{ \sum_{j = 1}^{2m}  \gamma_j \csc^2(\pi x - \pi t_j) \left( f_j - r_m^{\gamma, t}(x)\right)  }{\sum_{j = 1}^{2m} \gamma_j \cot( \pi x - \pi t_j)}.
\end{equation} 
To evaluate $(r_m^{\gamma, t})^\prime$ at the interpolating points $t = (t_1, \ldots, t_{2m})^T$, we use the 
special differentiation matrices introduced in~\cite{baltensperger2002some}. Explicit descriptions of recursive formulas for computing higher derivatives are also found in~\cite{baltensperger2002some}. All of this is encoded within a function handle that is accessed in REfit by applying the command $\texttt{diff}$ to an rfun.

\subsection{Integration}  The indefinite integral of a trigonometric rational $r_m$ is not itself a trigonometric rational.  In Fourier space, if $R_m = \mathcal{F}(r_m)$ is a sum of complex exponentials as in~\cref{eq:Exp_Sum}, then except at $k = 0$, the Fourier coefficients of $\mathcal{F}(g)$, where 
$ g(y) = \int_{0}^y r_m(x) dx,$
are given by $\hat{g}_{k} = R_m(k)/2 \pi i k$.  A function handle  for evaluating $\mathcal{F}(g)$ is returned when \texttt{cumsum} is applied to an efun. One can also fit a new complex exponential to $\mathcal{F}(g)$ by typing \texttt{cumsum($\cdot$,`efun')}, though it may not be an efficient representation.  

If \texttt{cumsum} is applied to an rfun, we supply a handle for $g$ that applies Gauss-Legendre quadrature~\cite{hale2013fast}.  The stable evaluation property of the barycentric form is advantageous here, which is why we do not instead make use of the pole-residue form of the rational $r_m(z)$ in~\cref{eq:zt_form}. To integrate $r_m$ over a finite interval $[a, b] \subset [0, 1)$, the command \texttt{sum($\cdot$,a,b)} can be applied to an rfun or an efun. 

\subsection{Rootfinding and polefinding}
\label{sec:Roots}
The roots of the barycentric trigonometric rational $r_m^{\gamma, t}$ coincide with the generalized eigenvalues of a matrix pencil. Specifically, if $r_m^{\gamma, t}(\zeta_j) = 0$ and $\mu = e^{ 2 \pi i \zeta_j}$, then there is  nonzero $y$ such that $E y = \mu By$, where 
\begin{equation}
E = \left[ \begin{array}{ ccc|c }
 e^{2\pi i x_1} &  &&  i \omega_1e^{2 \pi i x_1} \\
 &  \ddots & &  \vdots \\
  &  &  e^{2\pi i  x_{2m}} & i \omega_{2m}e^{2 \pi i x_{2m}}\\ 
  \hline
f_1  & \cdots & f_{2m}& 0
\end{array}\right], \quad 
B = \left[ \begin{array}{ ccc|c }
 1&  &&  i \omega_1 \\
 &  \ddots & &  \vdots \\
  &  &  1 & i \omega_{2m}\\
  \hline
0  & \cdots &0 & 0
\end{array}\right].
\end{equation}
The pencil $(E, B)$ has at least two infinite eigenvalues and  one eigenvalue at  $\mu_{0} = 0$ corresponding to $\zeta_0 = -\infty$ (this captures the asymptotic behavior of $r_m^{\gamma, t}$). Once the remaining $2m-2$ eigenvalues are found, the  zeros of $r_m^{\gamma, t}$ are immediate. The command \texttt{roots} applied to rfuns or efuns applies this algorithm and returns real-valued roots. For efuns, this requires first converting to an rfun via the Fourier transform. The command \texttt{roots($\cdot$,`all')} additionally returns complex-valued roots. 

The poles of $r_m^{\gamma, t}$ can be found in a similar way: the pencil $(\tilde{E}, B)$, where $\tilde{E}$ is identical to $E$ except that each $f_j$ in the last row is replaced by $1$, has at least one infinite eigenvalue. If $\tilde{\mu}_j$ is one of the remaining finite eigenvalues, then $ \eta_j = \log \tilde{\mu}_j / 2 \pi i $ is a pole of $r_m^{\gamma, t}$.
If it is important to preserve the pole symmetry, it is better to represent $r_m$ with an exponential sum $R_m$.

\paragraph{Residues} We compute the residues of the barycentric interpolant $r_m^{\gamma, t}$ using the fact that $r_m^{\gamma, t} = n_{m-1}/d_m$, where $n_{m-1}$ and $d_m$ are trigonometric polynomials as in~\cref{eq:BarycentricRat}. Since the poles of $r_m^{\gamma, t}$ are simple, the residue at the pole $\eta_j$ is given by
$ {\rm Res}(r_m^{\gamma, t}, \eta_j) = n_{m-1}(\eta_j)/ d_m^{ \, \prime} (\eta_j).$
The residues of the poles of $\mathcal{F}^{-1}(R_m)$, where $R_m$ is an exponential sum, have a closed form formula (see \cref{sec:FT}). 

\paragraph{Minima and Maxima}
The commands \texttt{min} or \texttt{max} a return the local minima (or maxima) attained by the represented trigonometric rational on the interval $[0, 1)$. The global minimum, for example, can be found by typing \texttt{min(min($\cdot$))}. 
To compute the extrema,  we use a rfun and apply the differentiation formula in~\cref{eq:derivative} to evaluate its derivative.  We use this to construct a rfun representing $(r_m^{\gamma, t})^\prime$,  find its roots, and then test for concavity. For an application, see \cref{sec:ECG}.  

\subsection{Additional commands} The REfit package includes commands for data visualization and common tasks in signal processing, such as filtering and cross-correlation. Commands related to the pole--residue format of the rational $\tilde{r}_m(z)$ from~\cref{eq:zt_form} are also available, as this format is closely related to the notion of the $z$-transform~\cite{oppenheim2001discrete} and is useful for interpretation and analysis. 

\subsection{Nonperiodic representations} \label{sec:nonper}
It is natural to consider the extension of these ideas and algorithms to non-periodic signals.  In signal space, a slight modification of the standard AAA algorithm can be used to construct type $(k\!-\!1, k)$ barycentric rational interpolants with basis functions as in~\eqref{eq:std_bary}. A type $(k\!-\!1, k)$ rational function with simple poles that have nonzero imaginary parts can be associated with a Laurent expansion, and the Laurent coefficients can be expressed in terms of two short sums of complex exponentials~\cite[Ch.~4]{stein2010complex}. The method in~\cref{sec:FT} can be used to find the parameters of the exponential sums, though one requires a scheme for computing the Laurent coefficients accurately from data samples.
Modifications of the algorithms in~\cref{sec:REfit} can then be applied to compute with Laurent coefficients and the associated rational barycentric interpolants.
 
A general approach favored by many practitioners is to apply windowing strategies~\cite{gabor1946theory, gottlieb1997gibbs, harris1978use} to compute the Fourier transform of the non-periodic signal, or to use Fourier extension methods~\cite{bruno2003fast}. If the signal is first represented by a barycentric rational interpolant using AAA, then the pole or barycentric node locations can be used to adaptively select parameters for determining windows and designing quadrature schemes.  We hope in future work to develop these observations into  strategies for computing with time-frequency representations of general rational functions. 

\section{Conclusion} We have introduced a framework for signal reconstruction and automated computing that employs efficient representations in both time and frequency space. Our work integrates ideas from the harmonic analysis community involving exponential sums and Hankel operator theory~\cite{beylkin2005approximation, plonka2016application} with developments in adaptive barycentric rational interpolation~\cite{berrut1988rational, henrici1979barycentric, nakatsukasa2018aaa}.  An implementation of all of the described methods is publicly available in the REfit software package~\cite{wilberREfit}. 

\section*{Acknowledgements} We thank Marc Aur\`ele Gilles and Nick Trefethen for reading an early draft of this manuscript. We also thank Yuji Nakatsukasa and Nick Trefethen for several valuable conversations, and we thank two anonymous referees for their helpful suggestions.

\bibliographystyle{siam}
\bibliography{refs}

\end{document}